\UseAllTwocells \xyoption{frame} \CompileMatrices
\newtheorem{prop}{Proposition}
\newtheorem{lem}[prop]{Lemma}
\newtheorem{cor}[prop]{Corollary}
\newtheorem{thm}[prop]{Theorem}
\newtheorem{defn}[prop]{Definition}
\theoremstyle{remark}
\theoremstyle{remark}
\numberwithin{equation}{section}
\newcommand{\com}{\mathbb{C}}
\newcommand{\T}{\mathsf{T}}
\newcommand{\Hilb}{\mathsf{Hilb}^n(\mathbb{C}^2)}
\newcommand{\Sym}{\mathsf{Sym}^n}
\newcommand{\hilbnc}{\mathsf{Hilb}^n(\mathbb{C}^2)}
\newcommand{\cF}{\mathcal{F}}
\def\<{\left\langle}
\def\>{\right\rangle}
\newcommand{\CC}{\mathbb{C}}
\def\ch{{\rm ch}}
\def\b1{{\mathbf 1}}
\begin{document}

\title[The $\mathsf{Hilb}/\mathsf{Sym}$
correspondence for $\CC^2$: descendents and
Fourier-Mukai]{The $\mathsf{Hilb}/\mathsf{Sym}$
correspondence for $\CC^2$: \\descendents and
Fourier-Mukai}

\author[Pandharipande]{Rahul Pandharipande}
\address{Department of  Mathematics\\ ETH Z\"urich \\ R\"amistrasse 101 \\ 8092 Z\"urich\\ Switzerland}
\email{rahul@math.ethz.ch}

\author[Tseng]{Hsian-Hua Tseng}
\address{Department of Mathematics\\ Ohio State University\\ 100 Math Tower, 231 West 18th Ave. \\ Columbus,  OH 43210\\ USA}
\email{hhtseng@math.ohio-state.edu}

\date{August 2019}

\begin{abstract}
    We study here the crepant resolution correspondence for the $\T$-equivariant descendent Gromov-Witten theories of $\Hilb$ and $\Sym(\CC^2)$.
The descendent correspondence is obtained from
our previous matching of the  associated CohFTs
by applying  Givental's 
quantization formula to a specific
symplectic
transformation
$\mathsf{K}$. The first result of the
paper is an explicit computation of $\mathsf{K}$.
Our main result then
establishes a fundamental relationship between the
Fourier-Mukai equivalence of the associated derived categories
(by Bridgeland, King, and Reid) and the
symplectic transformation $\mathsf{K}$
via Iritani's integral structure. The results
use Haiman's Fourier-Mukai calculations and
are exactly aligned with Iritani's point of view on
crepant resolution.
\end{abstract}

\maketitle
\setcounter{tocdepth}{1}
\tableofcontents

\setcounter{section}{-1}
\section{Introduction}

\subsection{Overview}
The diagonal action on $\CC^2$ of the torus $\T =(\CC^*)^2$ lifts canonically to the Hilbert scheme of $n$ points $\Hilb$ and the
orbifold symmetric product
$$\Sym(\CC^2)=[(\CC^2)^n/\Sigma_n]\, .$$
Both the Hilbert-Chow morphism  
\begin{equation}\label{xxx9}
\Hilb \rightarrow (\CC^2)^n/\Sigma_n
\end{equation}
and the coarsification morphism
\begin{equation}\label{xxx9.5}
\Sym(\CC^2) \rightarrow (\CC^2)^n/\Sigma_n
\end{equation}
are $\T$-equivariant crepant resolutions of the singular
quotient variety $(\CC^2)^n/\Sigma_n$.

The geometries of the two crepant resolutions 
$\Hilb$ and $\Sym(\CC^2)$ are connected in many beautiful ways.
The classical McKay correspondence \cite{MK} provides
an isomorphism on the level of $\T$-equivariant cohomology: $\T$-equivariant singular cohomology for $\Hilb$ and 
$\T$-equivariant Chen-Ruan orbifold
cohomology for $\Sym(\CC^2)$.
A
lift of the McKay correspondence to an equivalence of 
$\T$-equivariant derived categories was proven by
Bridgeland, King, and Reid \cite{bkr}
using a Fourier-Mukai transformation.

Quantum cohomology provides a different enrichment of the McKay correspondence. For the crepant resolutions
$\Hilb$ and $\Sym(\CC^2)$, the genus 0
equivalence of the $\T$-equivariant Gromov-Witten
theories was proven in \cite{bg} using \cite{bp,op}.
Going further, the crepant resolution correspondence in all 
genera
was proven in \cite{pt} by matching the associated
$\mathsf{R}$-matrices and
Cohomological Field Theories (CohFTs), see \cite[Section 4]{p-icm} for
a survey.

The results of \cite{bg,pt} concern the $\T$-equivariant
Gromov-Witten theory with {\em primary} insertions.
However, following a remarkable proposal of Iritani,
to see the connection between the Fourier-Mukai
transformation of \cite{bkr} and the 
crepant resolution correspondence for Gromov-Witten
theory,  {\em descendent} insertions are required.
Our first result here is a determination
of the crepant resolution correspondence for the
$\T$-equivariant Gromov-Witten theories of $\Hilb$ and $\Sym(\CC^2)$
with {descendent} insertions via a symplectic
transformation
$\mathsf{K}$ which we compute explicitly.
The main result of the paper is a proof of 
a fundamental relationship between the
Fourier-Mukai equivalence of the associated derived categories
\cite{bkr} and the
symplectic transformation $\mathsf{K}$
via Iritani's integral structure. The results
use Haiman's Fourier-Mukai calculations \cite{haiman_cdm,haiman_sym2001} and
are exactly aligned with Iritani's point of view on
crepant resolutions \cite{i,i2}.

\subsection{Descendent correspondence}
The descendent correspondence 
for the $\T$-equivariant Gromov-Witten theories
of  $\Hilb$ and $\Sym(\CC^2)$
is obtained from the CohFT matching of \cite{pt} together with the quantization formula of Givental \cite{g}. Our first result is a formula for the symplectic transformation
$$\mathsf{K} \in \text{Id} + z^{-1}\cdot \text{End}(H^*_\T(\Hilb))[[z^{-1}]]$$
defining the descendent correspondence.{\footnote{Cohomology
will always be taken here with $\CC$-coefficients.}} 

The formula for $\mathsf{K}$ is best described in terms of the Fock space $\cF$ which is freely generated over $\mathbb{C}$ by commuting creation operators $\alpha_{-k}$ for $k\in \mathbb{Z}_{>0}$ acting on the vacuum vector $v_{\emptyset}$. The annihilation operators $\alpha_k, k\in \mathbb{Z}_{>0}$ satisfy $$\alpha_k\cdot v_\emptyset=0\,,\ \ \  k>0$$ and commutation relations $$[\alpha_k,\alpha_l]=k\delta_{k+l}\, .$$
The Fock space $\cF$ admits an additive basis $$|\mu\rangle=\frac{1}{\mathfrak{z}(\mu)}\prod_i \alpha_{-\mu_i}v_\emptyset\, , \quad \mathfrak{z}(\mu)=|\text{Aut}(\mu)| \prod_i \mu_i\, ,$$ indexed by partitions $\mu=(\mu_1, \mu_2,...)$.

An additive isomorphism 
\begin{equation}\label{ffkk}
\cF \otimes _{\mathbb C} {\mathbb C}[t_1,t_2]\stackrel{\sim}{=} 
\bigoplus_{n\geq 0} H_{\T}^*(\Hilb)\, ,
\end{equation}
is given by identifying $|\mu\rangle$ on the left
with the corresponding Nakajima basis elements on the right. The intersection pairing $(-,-)^\text{Hilb}$ on the $\T$-equivariant cohomology of $\hilbnc$ induces a pairing on Fock space,  $$\eta( \mu,\nu)=\frac{(-1)^{|\mu|-\ell(\mu)}}{(t_1t_2)^{\ell(\mu)}}\frac{\delta_{\mu\nu}}{\mathfrak{z}(\mu)}\, .$$

In the following result, we write the formula for $\mathsf{K}$
in terms of the Fock space,
$$\mathsf{K}\in \text{Id}+z^{-1}\cdot \text{End}(\cF\otimes_{\mathbb C} {\mathbb C}[t_1,t_2])[[z^{-1}]], ,$$ 
using
\eqref{ffkk}.  

\begin{thm}\label{kkkk}
The descendent correspondence is determined by the symplectic transformation $\mathsf{K}$ given by the formula
\begin{equation*}
\mathsf{K}\left(\mathsf{J}^\lambda \right)=\frac{z^{|\lambda|}}{(2\pi\sqrt{-1})^{|\lambda|}}\left(\prod_{\mathsf{w}: \T\text{-weights of } \text{Tan}_\lambda\hilbnc}\Gamma(\mathsf{w}/z+1)\right)\spadesuit\mathsf{H}^\lambda_z\, .
\end{equation*}
\end{thm}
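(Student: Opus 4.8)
The plan is to compute $\mathsf{K}$ directly from the two ingredients entering its definition: the CohFT matching of \cite{pt} and Givental's quantization formalism \cite{g}. For a semisimple CohFT, Givental's formulas express the descendent potential as the image of the ancestor potential under the quantization $\widehat{S^{-1}}$ of the fundamental solution $S$ of the Dubrovin connection (normalized to lie in $\mathrm{Id}+z^{-1}\,\mathrm{End}[[z^{-1}]]$), while the ancestor potential is $\widehat{R}$ applied to a product of Witten--Kontsevich tau functions placed at the idempotent coordinates. Since \cite{pt} identifies both the Frobenius structures and the $R$-matrices of $\hilbnc$ and $\Sym(\CC^2)$ after the analytic continuation $q=-e^{\sqrt{-1}u}$, the ancestor potentials of the two theories coincide, and hence the descendent correspondence is the composition
\[
\mathsf{K}=\bigl(S^{\,\Hilb}\bigr)^{-1}\circ S^{\,\Sym}
\]
(up to the standard conventions on dilaton shift and transpose, and the McKay identification of the two underlying cohomologies). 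The first step is thus to reduce Theorem \ref{kkkk} to evaluating this composition of fundamental solutions on the basis $\{\mathsf{J}^\lambda\}$ and identifying the result with the claimed Gamma-factors times $\{\mathsf{H}^\lambda_z\}$.

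The second step is to make both fundamental solutions explicit and to express the bases $\{\mathsf{J}^\lambda\}$ and $\{\mathsf{H}^\lambda_z\}$ in the common Nakajima/fixed-point basis of $\bigoplus_n H^*_\T(\hilbnc)$ via \eqref{ffkk}, so that $\mathsf{K}$ becomes an honest matrix identity. On the $\hilbnc$ side the quantum differential equation is governed by the operator $M_D$ of quantum multiplication by the divisor computed in \cite{op} (a trigonometric, Calogero--Sutherland-type operator), and by semisimplicity its fundamental solution is organized near $q=0$ by the $\T$-fixed points $\lambda$ (partitions of $n$); the basis $\{\mathsf{H}^\lambda_z\}$ is the corresponding distinguished system of asymptotic solutions. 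On the $\Sym(\CC^2)$ side the genus $0$ theory is controlled by \cite{bp,bg}, so the divisor multiplication and the $S$-matrix near the orbifold point $u=0$ admit closed forms, and $\{\mathsf{J}^\lambda\}$ is the corresponding distinguished solution basis — the one adapted to the K-theoretic classes needed for the main theorem.

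The third, and main, step is the connection computation: expressing the $\Sym$-adapted solutions $\mathsf{J}^\lambda$ in terms of the $\hilbnc$-adapted solutions $\mathsf{H}^\lambda_z$ after analytic continuation in $q$ from the orbifold limit to $q=0$. This is where the transcendental factors arise. Using the explicit form of the quantum differential operator, the solutions are of (generalized) hypergeometric type in $q$, and the connection coefficients between the two distinguished bases are read off from Mellin--Barnes integral representations; the reflection and duplication identities for the Gamma function then collapse these coefficients into the single product $\prod_{\mathsf{w}}\Gamma(\mathsf{w}/z+1)$ over the $\T$-weights of $\text{Tan}_\lambda\hilbnc$, while the scalar $z^{|\lambda|}/(2\pi\sqrt{-1})^{|\lambda|}$ records the two normalizations together with the $|\lambda|=n$ independent Barnes integrations. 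Equivalently, one recognizes $\prod_{\mathsf{w}}\Gamma(\mathsf{w}/z+1)$ as the restriction to the fixed point $\lambda$ of the $\widehat{\Gamma}$-class of $\hilbnc$, so that $\mathsf{K}(\mathsf{J}^\lambda)$ acquires exactly the shape predicted by Iritani's integral structure, which is what makes the subsequent Fourier--Mukai theorem go through.

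I expect the main obstacle to be precisely this last connection computation: one must carry the exact normalizations of both distinguished solution bases through the analytic continuation and verify that every scalar factor assembles into the Gamma product with no residual monodromy or sign ambiguity. Useful independent checks along the way are the case $n=1$, where $\hilbnc=\Sym(\CC^2)=\CC^2$ and the formula must reduce to the Gamma class of $\CC^2$; the leading $z^{-1}$-term of $\mathsf{K}$, which must recover the genus-$0$ crepant resolution transformation of \cite{bg}; and compatibility with the divisor and string equations.
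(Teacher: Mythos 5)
Your overall strategy is the same as the paper's: identify $\mathsf{K}$ with the comparison of the two descendent fundamental solutions $\mathsf{S}^\text{Hilb}$ and $\mathsf{S}^\text{Sym}$ of the quantum differential equations (this is the paper's Theorem \ref{thm:S_matrices}, obtained from the primary matching of \cite{bg,pt} plus uniqueness of solutions with given initial conditions, rather than from a direct appeal to the coincidence of ancestor potentials, but the content is the same), and then recognize $\mathsf{K}$ as the connection matrix for the analytic continuation in $q$ from $0$ to $-1$. The genuine gap is in your third step, which is where all the content lies. You assert that the solutions are of ``generalized hypergeometric type'' and that the connection coefficients can be ``read off from Mellin--Barnes integral representations'' and Gamma-function identities. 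For $n\geq 2$ the QDE of $\Hilb$ is a rank-$p(n)$ system attached to the trigonometric Calogero--Sutherland-type operator $\mathsf{M}_D$, not a scalar hypergeometric equation, and no Mellin--Barnes representation of its solutions is available by standard means; computing the connection matrix from $q=0$ to $q=-1$ is precisely the content of \cite[Theorem 4]{op29}, a substantial theorem. The paper does not re-derive it: the proof of Theorem \ref{kkkk} simply sets $\mathsf{K}=B\mathsf{A}^{-1}$ with $B=\mathsf{S}\big|_{q=-1}$ and substitutes the formula of \cite[Theorem 4]{op29} for $B$, from which the Gamma product, the scalar $z^{|\lambda|}/(2\pi\sqrt{-1})^{|\lambda|}$, and the operator $\spadesuit=\Theta\mathbf{\Gamma}_z$ all drop out. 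Your plan, as written, amounts to re-proving that theorem by techniques that do not obviously apply; either cite it or supply an actual argument.

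Two further inaccuracies would derail the bookkeeping even granting the connection formula. First, you have the roles of the two bases reversed: $\mathsf{J}^\lambda$ is the eigenbasis of $\mathsf{M}_D(0)$, i.e.\ the $\T$-fixed-point basis of $\Hilb$ adapted to $q=0$, while the Macdonald polynomials $\mathsf{H}^\lambda_z$ arise in the value of the continued solution at the orbifold point $q=-1$; your assignment of $\mathsf{H}^\lambda_z$ to the $q=0$ asymptotics and $\mathsf{J}^\lambda$ to the $\Sym$ point is backwards. Second, the comparison of solutions produces the extra factor $q^{D/z}$ (equivalently $\mathsf{A}L_0\mathsf{A}^{-1}$) and the diagonal operator $\spadesuit$, neither of which appears in your sketch; the statement $\mathsf{K}=(\mathsf{S}^\text{Hilb})^{-1}\circ \mathsf{S}^\text{Sym}$ ``up to conventions'' is in the wrong order/direction relative to $\mathcal{L}^\text{Sym}=\mathsf{C}\mathsf{K}q^{-D/z}\mathcal{L}^\text{Hilb}$ and would at best give $\mathsf{K}^{-1}$ without the divisor factor, so these normalizations need to be tracked explicitly rather than absorbed into a caveat.
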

Here, $\mathsf{J}^\lambda$ is the Jack symmetric function
defined by equation (\ref{eqn:jack}) of Section
\ref{qqddee}, and $\mathsf{H}^\lambda_z$ is the Macdonald polynomial\footnote{The footnote $z$ indicates a rescaling of the parameters, 
$\mathsf{H}^\lambda_z = \mathsf{H}^\lambda(\frac{t_1}{z},\frac{t_2}{z})$.}, see \cite{haiman_cdm,mac,op29}. The linear operator $$\spadesuit: \cF\to \cF$$ is defined by 
\begin{equation*}
\spadesuit |\mu\rangle= z^{\ell(\mu)} \frac{(2\pi\sqrt{-1})^{\ell(\mu)}}{\prod_i \mu_i}\prod_i \frac{\mu_i^{\mu_it_1/z}\mu_i^{\mu_i t_2/z}}{\Gamma(\mu_i t_1/z)\Gamma(\mu_i t_2/z)}|\mu\rangle   \, .
\end{equation*}

The descendent correspondence in genus $0$, expressed in terms of Givental's Lagrangian cones, is explained\footnote{See for (\ref{eqn:F_isom}) the definition of the symplectic isomorphism $\mathsf{C}$.} 
in Theorem \ref{thm:genus0_corresp} of
Section \ref{sec:genus0},
\begin{equation*}
\mathcal{L}^\text{Sym}=\mathsf{C}\mathsf{K}q^{-D/z}\mathcal{L}^\text{Hilb}\, ,
\end{equation*}
where $D=-|(2, 1^{n-2})\rangle$ is the $\T$-equivariant first Chern class of the tautological vector bundle on $\hilbnc$. 
The descendent correspondence for all $g$, formulated in terms of generating series, 
\begin{equation*}
e^{-F_1^\text{Sym}(\tilde{t})}\mathcal{D}^\text{Sym}=\widehat{\mathsf{C}}\, \widehat{\mathsf{K}}\, \widehat{q^{-D/z}}\,
\left(e^{-F_1^\text{Hilb}(t_D)}\mathcal{D}^\text{Hilb}\right)\, ,
\end{equation*}
is discussed in Theorem \ref{thm:higher_genus_corresp} of Section \ref{sec:higher_genus}.

For toric crepant resolutions, the symplectic transformation underlying the descendent correspondence is constructed in \cite{cij} by using explicit slices of Givental's Lagrangian cones constructed via the Toric Mirror Theorem \cite{ccit,g0}. 
We proceed differently here. The symplectic transformation $\mathsf{K}$ is constructed by comparing the two fundamental solutions $\mathsf{S}^\text{Hilb}$ and $\mathsf{S}^\text{Sym}$ of the
QDE given by descendent Gromov-Witten invariants of $\hilbnc$ and $\Sym(\CC^2)$ respectively. Via
the $\mathsf{Hilb}/\mathsf{Sym}$ correspondence
in genus 0, Theorem \ref{kkkk} is then simply a reformulation
of the calculation of the connection matrix in \cite[Theorem 4]{op29}.


\subsection{Fourier-Mukai}
An equivalence of $\T$-equivariant derived categories 
\begin{equation*}
\mathbb{FM}: D^b_\T(\hilbnc)\to D^b_\T(\text{Sym}^n(\mathbb{C}^2))    
\end{equation*}
is constructed by Bridgeland, King, and Reid in \cite{bkr} via a
tautological Fourier-Mukai kernel.
We also denote by 
$\mathbb{FM}$
the induced isomorphism on $\T$-equivariant $K$-groups,
\begin{equation}\label{xx99}
\mathbb{FM}: K_\T(\hilbnc)\to K_\T(\text{Sym}^n(\mathbb{C}^2))\, .  
\end{equation}

Iritani \cite{i} has proposed a beautiful
framework  for the crepant
resolution correspondence. In the case of
$\Hilb$ and $\Sym(\CC^2)$, the
isomorphism \eqref{xx99} 
on $K$-theory should 
be related to a symplectic transformation $$\mathcal{H}^\text{Hilb}\to \mathcal{H}^\text{Sym}$$ via Iritani's integral structure. 
The Givental spaces $\mathcal{H}^\text{Hilb}$ and $\mathcal{H}^\text{Sym}$ will be defined
below (in a multivalued form).
A discussion of Iritani's perspective can be found in \cite{i2}.
Our main result is a formulation and proof of Iritani's proposal
for the crepant resolutions $\Hilb$ and $\Sym(\CC^2)$.
For the precise statement, further definitions are required.

\vspace{5pt}
\noindent $\bullet\ $ Define the operators $\text{deg}^{\text{Hilb}}_0$, $\rho^\text{Hilb}$, and $\mu^\text{Hilb}$ on $H_\T^*(\hilbnc)$ as follows. For $\phi\in H_\T^k(\hilbnc)$, 
\begin{equation*}
\begin{split}
    &\text{deg}^{\text{Hilb}}_0(\phi)= k\phi\, ,\\
    &\mu^\text{Hilb}(\phi)=\left(\frac{k}{2}-\frac{2n}{2}\right)\phi\, ,\\
    &\rho^\text{Hilb}(\phi)= c_1^\T(\hilbnc)\cup\phi\, .
    \end{split}    
\end{equation*} 
The {\em multi-valued Givental space} 
$\widetilde{\mathcal{H}}^\text{Hilb}$
for $\hilbnc$ is defined by 
$$\widetilde{\mathcal{H}}^\text{Hilb}=H_\T^*(\hilbnc, \mathbb{C})\otimes_{\mathbb{C}[t_1,t_2]}\mathbb{C}(t_1,t_2)[[\log(z)]]((z^{-1}))\, .$$

\begin{defn}
Let $\Psi^\text{\em Hilb}: K_\T(\hilbnc)\to \widetilde{\mathcal{H}}^\text{\em Hilb}$ be defined by 
\begin{equation*}
  \Psi^\text{\em Hilb}(E)= z^{-\mu^\text{\em Hilb}}z^{\rho^\text{\em Hilb}}\left(\Gamma_\text{\em Hilb}\cup (2\pi \sqrt{-1})^{\frac{\text{\em deg}^{\text{\em Hilb}}_0}{2}}\text{\em ch}(E)\right)\, ,  
\end{equation*}
where $\text{\em ch}(-)$ is the $\T$-equivariant Chern character, $\Gamma_\text{Hilb}\in H_\T^*(\hilbnc)$ is the $\T$-equivariant Gamma class of $\hilbnc$ of \cite[Section 3.1]{cij}, and 
the operators 
$$z^{-\mu^\text{\em Hilb}}:\widetilde{\mathcal{H}}^\text{\em Hilb}\to \widetilde{\mathcal{H}}^\text{\em Hilb}\,,
\ \ \ \ \
z^{\rho^\text{\em Hilb}}:\widetilde{\mathcal{H}}^\text{\em Hilb}\to \widetilde{\mathcal{H}}^\text{\em Hilb}$$ are defined by 
$$z^{-\mu^\text{\em Hilb}}=\sum_{k\geq 0}\frac{\left(-\mu^\text{\em Hilb}\log z\right)^k}{k!}\, ,
\ \ \ \ \
z^{\rho^\text{Hilb}}=\sum_{k\geq 0}\frac{\left(\rho^\text{Hilb}\log z\right)^k}{k!}\, .$$
\end{defn}
Since $|\mu\rangle$ is identified with the 
corresponding Nakajima basis element, we have $$\text{deg}^{\text{Hilb}}_0|\mu\rangle=2(n-\ell(\mu))|\mu\rangle\, .$$
Also, since $t_1, t_2$ both have degree $2$, we have $$\text{deg}^{\text{Hilb}}_0t_1=2=\text{deg}^{\text{Hilb}}_0t_2\, .$$

\vspace{5pt}
\noindent $\bullet\ $ Define the operators\footnote{In the definition of $\rho^\text{Sym}$ we denote by $\cup_\text{CR}$ the Chen-Ruan cup product on cohomology of the inertia stack.} $\text{deg}^{\text{Sym}}_0$, $\rho^\text{Sym}$, and $\mu^\text{Sym}$ on $H_\T^*(I\text{Sym}^n(\mathbb{C}^2))$ as follows. For $\phi\in H_{\T}^k(I\text{Sym}^n(\mathbb{C}^2))$, 
\begin{equation*}
\begin{split}
    &\text{deg}^{\text{Sym}}_0(\phi)= k\phi\, ,\\
    &\mu^\text{Sym}(\phi)=\left(\frac{\text{deg}_{\text{CR}}(\phi)}{2}-\frac{2n}{2}\right)\phi\, ,\\
    &\rho^\text{Sym}(\phi)= c_1^\T(\text{Sym}^n(\mathbb{C}^2))\cup_\text{CR}\phi\, .
    \end{split}    
\end{equation*}
There are {\em two} degree operators here: $\text{deg}_0^{\text{Sym}}$ extracts the usual degree of a cohomology class on the inertia orbifold, and  $\text{deg}_{\text{CR}}$ extracts the age-shifted degree. Also, we have $$\text{deg}_\text{CR}t_1=\text{deg}^{\text{Sym}}_0t_1=2=\text{deg}_\text{CR}t_2=\text{deg}^{\text{Sym}}_0t_2\, .$$

The multi-valued Givental space $\widetilde{\mathcal{H}}^\text{Sym}$ for $\text{Sym}^n(\mathbb{C}^2)$ is defined by 
$$\widetilde{\mathcal{H}}^\text{Sym}=H_\T^*(I\text{Sym}^n(\mathbb{C}^2))\otimes_{\mathbb{C}[t_1,t_2]}\mathbb{C}(t_1,t_2)[[\log z]]((z^{-1}))\, .$$

\begin{defn}
Let $\Psi^\text{\em Sym}: K_\T(\text{\em Sym}^n(\mathbb{C}^2))\to \widetilde{\mathcal{H}}^\text{\em Sym}$ be defined by 
\begin{equation*}
  \Psi^\text{\em Sym}(E)= z^{-\mu^\text{\em Sym}}z^{\rho^\text{\em Sym}}\left(\Gamma_\text{\em Sym}\cup (2\pi \sqrt{-1})^{\frac{\text{\em deg}^\text{\em Sym}_0}{2}}\widetilde{\text{\em ch}}(E)\right)\, ,
\end{equation*}
where $\widetilde{\text{\em ch}}(-)$ is the $\T$-equivariant orbifold Chern character, $\Gamma_\text{\em Sym}\in H_\T^*(I\text{\em Sym}^n(\mathbb{C}^2))$ is the $\T$-equivariant Gamma class of $\text{\em Sym}^n(\mathbb{C}^2)$ of \cite[Section 3.1]{cij}, and
the
operators $$
z^{-\mu^\text{\em Sym}}:\widetilde{\mathcal{H}}^\text{\em Sym}\to \widetilde{\mathcal{H}}^\text{\em Sym}\, ,
\ \ \ \ \
z^{\rho^\text{\em Sym}}:\widetilde{\mathcal{H}}^\text{\em Sym}\to \widetilde{\mathcal{H}}^\text{\em Sym}$$ 
are defined by 
$$z^{-\mu^\text{\em Sym}}=\sum_{k\geq 0}\frac{(-\mu^\text{\em Sym}\log z)^k}{k!}\, ,
\ \ \ \ \ 
z^{\rho^\text{\em Sym}}=\sum_{k\geq 0}\frac{(\rho^\text{\em Sym}\log z)^k}{k!}\, .$$
\end{defn}

The precise relationship between $\mathbb{FM}$ 
and $\mathsf{K}$ via Iritani's integral structure is 
the central result of the paper.

\begin{thm}\label{thm:comparison}
The following diagram is commutative{\footnote{Our variable $z$ corresponds to $-z$ in \cite{cij} 
 as can be seen by the difference in the quantum differential equation (\ref{eqn:qde_hilb_D}) here 
 and the quantum differential equation \cite[equation (2.5)]{cij}. After the substitution $z\mapsto -z$ in $\mathsf{K}$,  Theorem \ref{thm:comparison} matches
 the conventions of Iritani's framework in \cite{cij}.
}}:
\begin{displaymath}
    \xymatrix{ 
    K_\T(\hilbnc) \ar[r]^{\mathbb{FM}}\ar[d]_{\Psi^\text{\em Hilb}} & K_\T(\text{\em Sym}^n(\mathbb{C}^2))\ar[d]^{\Psi^\text{\em Sym}} \\
   {\widetilde{\mathcal{H}}^\text{\em Hilb}}\ar[r]^{\mathsf{C}\mathsf{K}\big|_{z\mapsto -z}}& {\widetilde{\mathcal{H}}^\text{\em Sym}}.}
\end{displaymath}
\end{thm}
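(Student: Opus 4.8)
The plan is to reduce the commutativity of the diagram to a direct comparison of the two composites applied to an explicit generating set of $K_\T(\hilbnc)$, using the $\mathsf{Hilb}/\mathsf{Sym}$ correspondence in genus $0$ (Theorem \ref{kkkk}) to rewrite $\mathsf{C}\mathsf{K}$ and Haiman's Fourier--Mukai calculations \cite{haiman_cdm,haiman_sym2001} to compute $\mathbb{FM}$ on the same generators. First I would fix the $K$-theory basis. Equivariant $K$-theory of $\hilbnc$ is spanned by the classes of tautological bundles (associated to Schur functors), and by Haiman's work the Fourier--Mukai kernel is built from the Procesi bundle, so $\mathbb{FM}$ on these classes is given by explicit symmetric-function data: the image in $K_\T(\mathrm{Sym}^n(\mathbb{C}^2))$ is expressed through the $\mathbb{C}^*\times\mathbb{C}^*$-character of the Procesi bundle, which is governed by the modified Macdonald polynomials $\widetilde H^\lambda$. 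This is the input that makes the Macdonald polynomial $\mathsf{H}^\lambda_z$ in Theorem \ref{kkkk} appear on the Gromov--Witten side: the two occurrences of Macdonald polynomials — one from $\mathbb{FM}$ via Haiman, one from $\mathsf{K}$ via \cite{op29} — must be identified.

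Next I would unwind both $\Psi$-maps explicitly on a fixed-point or Nakajima basis. On the $\hilbnc$ side, the torus fixed points are indexed by partitions $\lambda$ of $n$, the tangent weights $\mathsf{w}$ at $\lambda$ are the usual arm/leg hook weights $a(\square)t_1 - (\ell(\square)+1)t_2$ and $-(a(\square)+1)t_1 + \ell(\square)t_2$, and $\Gamma_{\mathrm{Hilb}}$ localizes to $\prod_{\mathsf{w}}\Gamma(1+\mathsf{w}/z)$; combined with $z^{-\mu}z^{\rho}$ and the factor $(2\pi\sqrt{-1})^{\deg_0/2}$, the class $\Psi^{\mathrm{Hilb}}$ of a line-bundle-type $K$-class is, up to the Gamma factors and powers of $z$ and $2\pi\sqrt{-1}$, exactly the fundamental solution $\mathsf{S}^{\mathrm{Hilb}}$ evaluated at a specific point — this is Iritani's standard identification of $\Psi$ with the $\hat\Gamma$-twisted flat sections, and I would cite \cite{cij,i,i2} for the general shape and only verify the equivariant normalization. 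Doing the same on the $\mathrm{Sym}^n(\mathbb{C}^2)$ side, with the age grading entering $\mu^{\mathrm{Sym}}$ and $\widetilde{\mathrm{ch}}$ producing the twisted sectors indexed by conjugacy classes (partitions $\mu$ of $n$), gives $\Psi^{\mathrm{Sym}}$ in terms of $\mathsf{S}^{\mathrm{Sym}}$. The operator $\spadesuit$ and the prefactor $z^{|\lambda|}/(2\pi\sqrt{-1})^{|\lambda|}$ in Theorem \ref{kkkk} are precisely the bookkeeping that converts the $\hat\Gamma$-class normalization on one side into that on the other, so after these substitutions the claimed commutativity $\Psi^{\mathrm{Sym}}\circ\mathbb{FM} = (\mathsf{C}\mathsf{K})|_{z\mapsto -z}\circ\Psi^{\mathrm{Hilb}}$ becomes an identity between two symmetric-function expressions.

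The heart of the argument is then the identification of that symmetric-function identity with the connection-matrix computation of \cite[Theorem 4]{op29}. Concretely: the right-hand composite, after plugging in Theorem \ref{kkkk}, sends the $K$-class whose Chern character localizes as a product over tangent weights to $z^{|\lambda|}(2\pi\sqrt{-1})^{-|\lambda|}\big(\prod_{\mathsf{w}}\Gamma(\mathsf{w}/z+1)\big)\,\spadesuit\,\mathsf{H}^\lambda_z$, composed with $\mathsf{C}$ and the $\Psi^{\mathrm{Sym}}$-normalization; the left-hand composite sends the same class, via Haiman, to $\mathsf{H}^\lambda_z$ (up to the age-shift normalization built into $\Psi^{\mathrm{Sym}}$). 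Matching these is exactly the statement that the Gamma-class factors and the $\spadesuit$-operator reproduce the change of normalization between Haiman's Macdonald polynomial and the one appearing in the connection matrix of \cite{op29} — equivalently, that Iritani's integral structure intertwines the Fourier--Mukai equivalence with the GW crepant resolution transformation, which is the content of \cite[Conjecture]{i2} in this case. I expect the main obstacle to be precisely this last matching: tracking the myriad normalization constants — powers of $z$, of $2\pi\sqrt{-1}$, signs from $(-1)^{|\mu|-\ell(\mu)}$ in the pairing $\eta$, the age shift, the $\mathfrak{z}(\mu)$ factors in the Nakajima basis, and the $z\mapsto -z$ convention change noted in the footnote to Theorem \ref{thm:comparison} — and verifying that they assemble coherently so that the $\hat\Gamma$-classes on the two sides cancel against the analytic continuation of the Gamma functions across the wall. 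A secondary technical point is that $\mathbb{FM}$ is only defined on $K$-theory a priori, so one must check compatibility with the Chern-character/cohomology passage, i.e. that the Fourier--Mukai kernel's cohomological action (computable from Haiman's description of the Procesi bundle) is the one implicit in the diagram; this is where \cite{haiman_cdm,haiman_sym2001} do the real work and I would quote their formulas rather than rederive them.
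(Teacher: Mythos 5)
Your overall strategy is the one the paper follows — verify the two composites agree on an explicit set of $K$-classes, compute $\mathbb{FM}$ there via Haiman, compute the bottom row via the explicit formula for $\mathsf{K}$ (which already packages \cite[Theorem 4]{op29}), and match Gamma factors and constants — but as written the proposal has concrete gaps at exactly the points where the work lies. First, the choice of generators: the clean Haiman input is $\Phi(k_\lambda)=\widetilde H_\lambda$ for the \emph{skyscraper sheaves} at the $\T$-fixed points, not for tautological/Schur-functor bundles, and the skyscrapers only span after localizing $K_\T(\hilbnc)$ at $\mathrm{Frac}(R(\T))$; you need that reduction step (and the observation that $\Psi^{\mathrm{Hilb}}$, $\Psi^{\mathrm{Sym}}$ remain defined after localization because equivariant Chern characters of representations are invertible) to legitimately check the identity only on the $k_\lambda$. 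Second, quoting Haiman is not enough to pass from his answer to the right-hand vertical arrow: his $\Phi=F\circ\mathbb{FM}$ lands in symmetric functions via the Frobenius series $F$, whereas $\Psi^{\mathrm{Sym}}$ uses the orbifold Chern character $\widetilde{\ch}$. One must prove the intertwining $\delta\circ F=\widetilde{\ch}$, where $\delta$ multiplies the $|\mu\rangle$-component by $\prod_i(1-q^{\mu_i})(1-t^{\mu_i})$; this is established by evaluating both sides on the modules $V^\lambda\otimes\mathbb{C}[\mathbf{x},\mathbf{y}]$ and using plethystic substitution, and it is a genuine lemma, not a citation. Similarly, the restriction of $\Gamma_{\mathrm{Sym}}$ to the twisted sector $I_\mu$ must be computed (via the Gauss multiplication formula), since that is where the factors $\mu_i^{-\mu_i t_a}\Gamma(\mu_i t_a)$ matching the operator $\spadesuit$ come from.

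Third, the appeal to ``Iritani's standard identification of $\Psi$ with $\widehat\Gamma$-twisted flat sections of $\mathsf{S}^{\mathrm{Hilb}}$'' is neither available off the shelf in this noncompact equivariant orbifold setting nor needed, and leaning on it risks circularity: the relation between the quantum connection and $\mathsf{K}$ is already encoded in the explicit formula of Theorem \ref{kkkk}, so the proof of the diagram is a purely classical computation — Koszul resolution for $\ch(k_\lambda)$, the reflection identity $\Gamma(1+t)\Gamma(1-t)=2\pi\sqrt{-1}\,t/(e^{\pi\sqrt{-1}t}-e^{-\pi\sqrt{-1}t})$, and the bookkeeping of $z$, $2\pi\sqrt{-1}$, and the factor $(-\sqrt{-1})^{\ell(\mu)-n}$ relating $|\mu\rangle$ and $|\widetilde{\mu}\rangle$. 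Your proposal defers precisely this final matching (``tracking the myriad normalization constants'') rather than carrying it out, but that matching \emph{is} the proof; without it, and without the $\delta\circ F=\widetilde{\ch}$ bridge and the localization step, the argument is a plausible outline rather than a complete proof.
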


The bottom row of the
diagram of Theorem  \ref{thm:comparison} is determined by  
the analytic continuation  of solutions of the
quantum differential equation of $\Hilb$ along the ray from
0 to $-1$ in the $q$-plane \cite[Theorem 4]{op29}.
A 
lifting of monodromies of the quantum differential equation of $\hilbnc$ to autoequivalences of $D^b_\T(\hilbnc)$ has been
announced  by  Bezrukavnikov and Okounkov in \cite[Sections 3.2.8 and 5.2.7]{o} and \cite[Section 3.2]{o2}.
In their upcoming
paper \cite{BezOk}, commutative diagrams parallel
to Theorem \ref{thm:comparison}  are constructed in cases
of {\em flops} of holomorphic symplectic manifolds.{\footnote{In fact, the study of commutative diagrams connecting derived
equivalences and the solutions of the quantum differential
equation has old roots in the subject. See, for example, \cite{BoriHorj, Horja}.
These papers refer to talks of Kontsevich on homological
mirror symmetry in the 1990s for the first formulations.}}
Theorem \ref{thm:comparison} fits into
the framework of \cite{BezOk} if the relationship between
$\Hilb$ and $\Sym(\CC^2)$ is viewed morally as a flop in their
sense.

A special aspect of the ray from $0$ to $-1$ is the identification
of the end result of the analytic continuation (the right side of the diagram)
with the orbifold geometry $\Sym(\CC^2)$. 
The identification of the
end results of other
paths from $0$ to $-1$
with geometric theories
is an interesting direction
of study. Are there twisted orbifold theories which
realize these analytic continuations?

%
    

\subsection{Acknowledgments}
We thank J.~Bryan, T.~Graber, Y.-P.~Lee, A.~Okounkov, and
Y. Ruan for many conversations about the crepant resolution
correspondence for 
$\Hilb$ and $\Sym(\CC^2)$. 
The paper began with Y. Jiang
asking us about the role of the Fourier-Mukai transformation
in the results of \cite{pt}.
We are very grateful to H. Iritani
for detailed discussions about his integral structure and
crepant resolution framework.

R.~P. was partially supported by 
 SNF-200020162928, ERC-2012-AdG-320368-MCSK, 
 ERC-2017-AdG-786580-MACI,
 Swiss\-MAP, and
the Einstein Stiftung. 
H.-H. ~T. was partially supported by NSF grant DMS-1506551.
The research presented here was
furthered during a visit of the authors to Humboldt University
in Berlin in June 2018. 

The project has received funding from the European Research Council (ERC) under the European Union Horizon 2020 Research and Innovation Program (grant No. 786580).

\section{Quantum differential equations}
\label{qqddee}

\subsection{The differential equation}

We recall the quantum differential equation for $\hilbnc$ calculated in \cite{op} and further studied in \cite{op29}. We follow here the exposition \cite{op,op29}.

The quantum differential equation (QDE) for the Hilbert schemes of points on $\mathbb{C}^2$ is
given by
\begin{equation}\label{qde}
q\frac{d}{dq}\Phi=\mathsf{M}_D\Phi\, , \quad \Phi\in \mathcal{F}\otimes_\mathbb{C} \mathbb{C}(t_1, t_2),
\end{equation}
where $\mathsf{M}_D$
is the operator of quantum multiplication by $D=-|2, 1^{n-2}\rangle$,
\begin{multline}\label{operator_M}
\mathsf{M}_D=
(t_1+t_2)\sum_{k>0}\frac{k}{2}\frac{(-q)^k+1}{(-q)^k-1}\alpha_{-k}\alpha_k
\, -\, \frac{t_1+t_2}{2}\frac{(-q)+1}{(-q)-1}|\cdot |
\\
+\frac{1}{2}\sum_{k,l>0}\Big[t_1t_2\alpha_{k+l}\alpha_{-k}\alpha_{-l}-\alpha_{-k-l}\alpha_k\alpha_l\Big]\, .
\end{multline}
Here $|\cdot|=\sum_{k>0}\alpha_{-k}\alpha_k$ is the energy operator. 

While the quantum differential equation
\eqref{qde} has a regular singular point at $q=0$, the point $q=-1$ is regular.

The quantum differential equation considered in Givental's theory contains a parameter $z$. In the case of the Hilbert schemes of points on $\mathbb{C}^2$, the QDE with parameter $z$ is 
\begin{equation}\label{qde_z}
zq\frac{d}{dq}\Phi=\mathsf{M}_D\Phi, \quad \Phi\in \mathcal{F}\otimes_\mathbb{C} \mathbb{C}(t_1, t_2)\, .
\end{equation}
For $\Phi\in \mathcal{F}\otimes_\mathbb{C}\mathbb{C}(t_1,t_2)$, define 
\begin{equation}\label{eqn:rescale}
\Phi_z=\Phi\left(\frac{t_1}{z}, \frac{t_2}{z}, q\right).
\end{equation}
Define $\Theta\in \text{Aut}(\mathcal{F})$ by $$\Theta|\mu\rangle=z^{\ell(\mu)}|\mu\rangle\, .$$
The following Proposition allows us to use the results in \cite{op29}.

\begin{prop}\label{ll44}
If $\Phi$ is a solution of (\ref{qde}), then $\Theta\Phi_z$ is a solution of (\ref{qde_z}).
\end{prop}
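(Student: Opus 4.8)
The plan is to track how the two rescalings—the substitution $(t_1,t_2)\mapsto(t_1/z,t_2/z)$ recorded in $\Phi_z$ and the conjugation by the length-grading operator $\Theta$—interact with each term of the operator $\mathsf{M}_D$ in \eqref{operator_M}. The claim is essentially a bookkeeping identity: passing from \eqref{qde} to \eqref{qde_z} inserts a factor of $z$ on the left-hand side, and one must check that combining the two rescalings reproduces exactly that factor on the operator side.

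First I would compute the effect of each ingredient separately. The substitution $\Phi\mapsto\Phi_z$ scales $t_1,t_2$ by $z^{-1}$; since $\mathsf{M}_D$ is built from $t_1+t_2$, $t_1t_2$, and purely Nakajima-operator terms, the scaled operator $\mathsf{M}_D(t_1/z,t_2/z)$ has its first line (the $\alpha_{-k}\alpha_k$ and energy terms, which are linear in $t_1+t_2$) divided by $z$, the $t_1t_2\,\alpha_{k+l}\alpha_{-k}\alpha_{-l}$ term divided by $z^2$, and the $\alpha_{-k-l}\alpha_k\alpha_l$ term unchanged. Next, $\Theta$ acts on $|\mu\rangle$ by $z^{\ell(\mu)}$, i.e. it is diagonal in length; conjugating a Nakajima monomial operator by $\Theta$ multiplies it by $z$ raised to the change in length it effects. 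Concretely $\Theta\,\alpha_{-k}\alpha_k\,\Theta^{-1}=\alpha_{-k}\alpha_k$ and $\Theta|\cdot|\Theta^{-1}=|\cdot|$ (these preserve length), while $\Theta\,\alpha_{k+l}\alpha_{-k}\alpha_{-l}\,\Theta^{-1}=z\,\alpha_{k+l}\alpha_{-k}\alpha_{-l}$ (length increases by one: two creation, one annihilation) and $\Theta\,\alpha_{-k-l}\alpha_k\alpha_l\,\Theta^{-1}=z^{-1}\alpha_{-k-l}\alpha_k\alpha_l$ (length decreases by one).

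Then I would assemble: $\Theta\,\mathsf{M}_D(t_1/z,t_2/z)\,\Theta^{-1}$ has first line $z^{-1}\cdot 1\cdot(\text{first line of }\mathsf{M}_D)$, middle term $z^{-2}\cdot z\cdot(t_1t_2\text{ term})=z^{-1}(t_1t_2\text{ term})$, and last term $1\cdot z^{-1}\cdot(\alpha_{-k-l}\alpha_k\alpha_l\text{ term})=z^{-1}(\text{that term})$—so uniformly $\Theta\,\mathsf{M}_D(t_1/z,t_2/z)\,\Theta^{-1}=z^{-1}\mathsf{M}_D$. Since $\Theta$ and the $(t_1,t_2)$-rescaling both commute with $q\frac{d}{dq}$ (they involve neither $q$ nor $z$-dependence through $q$), applying $q\frac{d}{dq}$ to $\Theta\Phi_z$ gives $\Theta(q\frac{d}{dq}\Phi)_z=\Theta(\mathsf{M}_D\Phi)_z=\Theta\,\mathsf{M}_D(t_1/z,t_2/z)\,\Phi_z=\Theta\,\mathsf{M}_D(t_1/z,t_2/z)\,\Theta^{-1}\,\Theta\Phi_z=z^{-1}\mathsf{M}_D\,\Theta\Phi_z$, which is exactly \eqref{qde_z} after multiplying by $z$.

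The only real subtlety—and the step I would be most careful with—is the conjugation of the cubic Nakajima terms by $\Theta$: one must be sure that the length of a monomial $\alpha_{-a}\alpha_b\alpha_c\cdots$ applied to a basis vector changes by (number of negative-index factors) $-$ (number of positive-index factors), independent of the vector, so that $\Theta(\text{monomial})\Theta^{-1}$ is genuinely a scalar multiple of the monomial rather than something more complicated. This follows because each $\alpha_{-k}$ raises the partition length by one and each $\alpha_k$ lowers it by one on Nakajima basis elements; I would state this as a short lemma or inline remark and then the rest is the routine term-by-term accounting above.
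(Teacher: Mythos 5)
Your proposal is correct and follows essentially the same route as the paper: the paper reduces the Proposition to the commutation relations $\Theta\alpha_k=\tfrac{1}{z}\alpha_k\Theta$ and $\Theta\alpha_{-k}=z\alpha_{-k}\Theta$, which is exactly your observation that conjugating each Nakajima monomial by $\Theta$ scales it by $z$ to the net change in length, and the remaining term-by-term bookkeeping giving $\Theta\,\mathsf{M}_D(t_1/z,t_2/z)\,\Theta^{-1}=z^{-1}\mathsf{M}_D$ is what the paper leaves implicit. Your careful treatment of the cubic terms and the commutation with $q\frac{d}{dq}$ is the right (and complete) way to fill in the words ``follows immediately.''
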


\noindent Proposition
\ref{ll44} follow immediately from the following direct computation.

\begin{lem}
For $k>0$, we have $\Theta\alpha_k=\frac{1}{z}\alpha_k\Theta$ and $\Theta\alpha_{-k}=z\alpha_{-k}\Theta$.
\end{lem}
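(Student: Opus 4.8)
The plan is to prove the two operator identities $\Theta\alpha_k=\frac{1}{z}\alpha_k\Theta$ and $\Theta\alpha_{-k}=z\alpha_{-k}\Theta$ by checking them on the additive basis $\{|\mu\rangle\}$ of $\cF$, and then to deduce Proposition \ref{ll44} by substituting into the QDE. For the creation operator, I would compute both sides applied to $|\mu\rangle$. Since $\alpha_{-k}|\mu\rangle$ is (up to the combinatorial factor relating $|\mu\rangle$ to $\prod_i\alpha_{-\mu_i}v_\emptyset$) a sum of Nakajima states indexed by partitions obtained from $\mu$ by adjoining a part equal to $k$, each such state has length $\ell(\mu)+1$, so $\Theta$ scales it by $z^{\ell(\mu)+1}$; on the other side, $z\alpha_{-k}\Theta|\mu\rangle = z\cdot z^{\ell(\mu)}\alpha_{-k}|\mu\rangle$ produces the same scalar $z^{\ell(\mu)+1}$ on each resulting state. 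The bookkeeping with the $\mathfrak z(\mu)$ normalizations is routine and cancels from both sides. For the annihilation operator, I would argue similarly: $\alpha_k|\mu\rangle$ is a sum of states of length $\ell(\mu)-1$ (it removes a part of size $k$, using $[\alpha_k,\alpha_{-k}]=k$), so $\Theta$ scales each by $z^{\ell(\mu)-1}$, matching $\frac{1}{z}\alpha_k\Theta|\mu\rangle=\frac{1}{z}z^{\ell(\mu)}\alpha_k|\mu\rangle$. Alternatively, the annihilation identity follows from the creation identity by taking adjoints with respect to a suitable pairing, or simply by noting $\Theta$ is invertible with $\Theta^{-1}|\mu\rangle=z^{-\ell(\mu)}|\mu\rangle$ and conjugating.

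With the Lemma in hand, Proposition \ref{ll44} is a direct substitution. Given a solution $\Phi$ of \eqref{qde}, set $\Psi=\Theta\Phi_z$. The rescaling \eqref{eqn:rescale} replaces $t_1,t_2$ by $t_1/z,t_2/z$ in $\Phi$, hence $\Phi_z$ solves $q\frac{d}{dq}\Phi_z = \mathsf{M}_D|_{t_i\mapsto t_i/z}\,\Phi_z$. Now apply $\Theta$ on the left: $q\frac{d}{dq}\Psi = \Theta\,\mathsf{M}_D|_{t_i\mapsto t_i/z}\,\Theta^{-1}\Psi$, and it remains to identify $\Theta\,\mathsf{M}_D|_{t_i\mapsto t_i/z}\,\Theta^{-1}$ with $\frac{1}{z}\mathsf{M}_D$. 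Using the Lemma term by term in \eqref{operator_M}: the quadratic terms $\alpha_{-k}\alpha_k$ (and the energy operator $|\cdot|$) are invariant under conjugation by $\Theta$ since the $z$ and $z^{-1}$ cancel, so the first line of \eqref{operator_M} picks up only the overall factor $1/z$ from the rescaling of $t_1+t_2$; the cubic term $t_1t_2\,\alpha_{k+l}\alpha_{-k}\alpha_{-l}$ gets $(1/z)(z)(z)=z$ from $\Theta$-conjugation and $1/z^2$ from rescaling $t_1t_2$, net $1/z$; and the cubic term $\alpha_{-k-l}\alpha_k\alpha_l$ gets $(z)(1/z)(1/z)=1/z$ from $\Theta$-conjugation with no rescaling, again net $1/z$. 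Hence every term scales by exactly $1/z$, giving $q\frac{d}{dq}\Psi=\frac{1}{z}\mathsf{M}_D\Psi$, i.e. $zq\frac{d}{dq}\Psi=\mathsf{M}_D\Psi$, which is \eqref{qde_z}.

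The only genuinely delicate point is verifying that $\alpha_{\pm k}$ acting on $|\mu\rangle$ produces a \emph{uniform} length shift of $\mp 1$ across all terms — this is where one must be careful with the normalization $|\mu\rangle=\frac{1}{\mathfrak z(\mu)}\prod_i\alpha_{-\mu_i}v_\emptyset$ and with the possibility that $\alpha_k$ annihilates $|\mu\rangle$ when $k$ is not a part of $\mu$ (in which case both sides vanish and there is nothing to check). I expect this to be the main obstacle only in the sense of requiring care, not depth: once one writes $\alpha_{-k}$ as simply prepending a part and $\alpha_k$ as a derivation-type operator contracting against parts of size $k$, the length-grading statement $\ell$-degree of $\alpha_{\pm k}$ is $\mp 1$ is immediate, and $\Theta$ is by definition the operator implementing $z^{\ell(\cdot)}$, so the identities drop out. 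I would present the creation-operator case in full and remark that the annihilation case is identical (or follows by adjunction), then assemble Proposition \ref{ll44} as above.
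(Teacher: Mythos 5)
Your proof of the Lemma is correct and matches the paper's intent exactly: the paper leaves it as a "direct computation," and checking both identities on the Nakajima basis $|\mu\rangle$ using that $\alpha_{-k}$ raises length by one and $\alpha_k$ lowers it by one (or annihilates) is precisely that computation. The additional term-by-term verification that conjugation by $\Theta$ together with the rescaling $t_i\mapsto t_i/z$ scales every term of $\mathsf{M}_D$ by $1/z$ is also correct and is the same route by which the paper deduces Proposition \ref{ll44}.
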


\subsection{Solutions}

We recall the solution of QDE (\ref{qde}) constructed in \cite{op29}. Let $$J_\lambda\in \mathcal{F}\otimes_\com \com(t_1,t_2)$$ be the integral form of the Jack symmetric function depending on the parameter $\alpha=1/\theta$ of \cite{mac,op29}. Then 
\begin{equation}\label{eqn:jack}
\mathsf{J}^\lambda=t_2^{|\lambda|}t_1^{\ell(\cdot )}J_\lambda|_{\alpha=-t_1/t_2}
\end{equation}
is an eigenfunction of $\mathsf{M}_D(0)$ with eigenvalue $-c(\lambda;t_1,t_2):=-\sum_{(i,j)\in \lambda}[(j-1)t_1+(i-1)t_2]$. The coefficient of $$|\mu\rangle\in \mathcal{F}\otimes_\com \com(t_1,t_2)$$ in the expansion of $\mathsf{J}^\lambda$ is $(t_1t_2)^{\ell(\mu)}$ times a polynomial in $t_1$ and $t_2$ of degree $|\lambda|-\ell(\mu)$.

The paper \cite{op29} also uses a Hermitian pairing $\<-,-\>_{H}$ on the Fock space $\cF$ defined by the three following properties
\begin{enumerate}
\item[$\bullet$]
$\langle \mu|\nu\rangle_{H}=\frac{1}{(t_1t_2)^{\ell(\mu)}}\frac{\delta_{\mu\nu}}{\mathfrak{z}(\mu)}$,

\vspace{7pt}
\item[$\bullet$]
$\langle a f, g\rangle_H=a\langle f,g\rangle_H, \quad a\in \mathbb{C}(t_1, t_2)$,

\vspace{7pt}
\item[$\bullet$]
$\langle f,g\rangle_H=\overline
{\langle g,f\rangle}_H, \text{ where } \overline{a(t_1,t_2)}=a(-t_1, -t_2)$ .
\end{enumerate}

By a direct calculation, we find 
\begin{equation}\label{eqn:jack_pairings}
\<\mathsf{J}^\lambda, \mathsf{J}^\mu\>_H=\eta(\mathsf{J}^\lambda, \mathsf{J}^\mu)\, ,
\end{equation}
where $\eta$ is the $\T$-equivariant pairing on
$\Hilb$.
Since $\mathsf{J}^\lambda$ corresponds to the $\T$-equivariant class
of the $\T$-fixed point of $\Hilb$ associated to $\lambda$,
\begin{equation}\label{eqn:jack_lengths}
||\mathsf{J}^\lambda||^2=||\mathsf{J}^\lambda||_H^2=\prod_{\mathsf{w}: \text{ tangent weights at $\lambda$}} \mathsf{w}\,
\end{equation}
see \cite{op29}. 

There are solutions to (\ref{qde}) of the form $$\mathsf{Y}^\lambda(q)q^{-c(\lambda;t_1,t_2)}, \quad \mathsf{Y}^\lambda(q)\in \mathcal{F}\otimes_\mathbb{C}\mathbb{C}(t_1, t_2)[[q]],$$ which converge for $|q|<1$ and satisfy $\mathsf{Y}^\lambda(0)=\mathsf{J}^\lambda$. We refer to \cite[Chapter XIX]{ince} for a discussion of how these solutions are constructed.

By \cite[Corollary 1]{op29}, 
\begin{equation}\label{eqn:length_Y}
\langle \mathsf{Y}^\lambda(q), \mathsf{Y}^\mu(q)\rangle_H=\delta_{\lambda\mu} ||\mathsf{J}^\lambda||_H^2=\langle \mathsf{J}^\lambda, \mathsf{J}^\mu\rangle_H.
\end{equation}

As in \cite[Section 3.1.3]{op29}, let $\mathsf{Y}$ be the matrix whose column vectors are $\mathsf{Y}^\lambda$. 
Fix an auxiliary basis $\{e_\lambda\}$ of $\cF$. We then view $\mathsf{Y}$ as the matrix representation\footnote{In the domain of $\mathsf{Y}$ we use the basis $\{e_\lambda\}$, while in the range of $\mathsf{Y}$ we use the basis $\{|\mu\rangle\}$.} of an operator such that $\mathsf{Y}(e_\lambda)=\mathsf{Y}^\lambda$.

Define the following further diagonal matrices in the basis $\{e_\lambda\}$:
\begin{center}
\begin{tabular}[c]{|l|r|}
\hline
{Matrix} & {Eigenvalues} \\
\hline
$L$ & $z^{-|\lambda|}\prod_{\mathsf{w}: \text{ tangent weights at $\lambda$}} \mathsf{w}^{1/2}$\\
\hline
$L_0$ & $q^{-c(\lambda;t_1,t_2)/z}$\\
\hline
\end{tabular}
\end{center}

Define
$$\mathsf{Y}_z=\mathsf{Y}\left(\frac{t_1}{z}, \frac{t_2}{z}, q\right).$$ 
Consider the following solution to (\ref{qde_z}),
\begin{equation}\label{eqn:sol_des}
\mathsf{S}=\Theta\mathsf{Y}_zL^{-1}L_0\, .
\end{equation}
We may view $\mathsf{S}$ as the matrix representation of an operator where in the domain  we use the  basis $\{e_\lambda\}$ while in the range  we use the basis $\{|\mu\rangle\}$.

\begin{prop}\label{prop:qde_soln1}
$\Theta\mathsf{Y}_zL^{-1}$ can be expanded into a convergent power series in $1/z$ with coefficients $\text{End}(\mathcal{F})$-valued analytic functions in $q, t_1, t_2$.
\end{prop}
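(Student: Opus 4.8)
The plan is to analyze the three factors $\Theta$, $\mathsf{Y}_z$, and $L^{-1}$ separately and track how each depends on $z$. Writing $\mathsf{Y}_z = \mathsf{Y}(t_1/z, t_2/z, q)$, recall from \eqref{eqn:jack} and the statement following it that the coefficient of $|\mu\rangle$ in $\mathsf{J}^\lambda = \mathsf{Y}^\lambda(0)$ is $(t_1t_2)^{\ell(\mu)}$ times a polynomial in $t_1,t_2$ of degree $|\lambda|-\ell(\mu)$. After the rescaling $t_i\mapsto t_i/z$, this coefficient acquires a factor $z^{-(|\lambda|-\ell(\mu))}\cdot z^{-2\ell(\mu)} = z^{-|\lambda|-\ell(\mu)}$. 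Since $\Theta$ acts on the target basis $|\mu\rangle$ by multiplication by $z^{\ell(\mu)}$, the composite $\Theta\mathsf{Y}_z$ has $(\mu,\lambda)$-entry (at $q=0$) equal to $z^{-|\lambda|}$ times a polynomial in $t_1,t_2,1/z$. The diagonal matrix $L^{-1}$ contributes exactly the factor $z^{|\lambda|}\prod_{\mathsf{w}}\mathsf{w}^{-1/2}$ on $e_\lambda$, which cancels the $z^{-|\lambda|}$ and leaves entries that are power series in $1/z$ — this is the key bookkeeping point, and it is the reason the particular normalization $L$ was chosen in the table above.

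First I would make the degree-counting argument precise at $q=0$: I would show that for each $\mu$ and $\lambda$ the $(\mu,\lambda)$-entry of $\Theta\mathsf{Y}_z L^{-1}$ evaluated at $q=0$ lies in $\CC(t_1,t_2)[1/z]$, using the stated polynomiality of the Jack function coefficients together with the explicit eigenvalues of $\Theta$ and $L$. Then I would extend this to general $q$ in the disc $|q|<1$. The solutions $\mathsf{Y}^\lambda(q)$ are constructed (as recalled, following \cite[Chapter XIX]{ince}) as the unique power series solutions of the Fuchsian system \eqref{qde} near $q=0$ with prescribed leading term $\mathsf{J}^\lambda$; their coefficients in $q$ are determined recursively by the matrix $\mathsf{M}_D$. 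I would substitute $t_i\mapsto t_i/z$ into this recursion and observe that, because $\mathsf{M}_D$ in \eqref{operator_M} is a sum of terms each of which multiplies by a monomial of bounded degree in $t_1,t_2$ and shifts $\ell$ in a controlled way, the same $z$-degree bound that holds for the leading term $\mathsf{J}^\lambda$ propagates to every $q$-coefficient of $\mathsf{Y}^\lambda$. Hence each entry of $\Theta\mathsf{Y}_z L^{-1}$, expanded in $q$, has coefficients in $\CC(t_1,t_2)[1/z]$, with the power of $1/z$ growing at worst linearly in the $q$-degree, so that the full series in $q$ converges (for $|q|<1$, as already asserted for $\mathsf{Y}^\lambda$) to an analytic function whose expansion in $1/z$ is itself convergent with $\mathrm{End}(\cF)$-valued analytic coefficients in $q,t_1,t_2$.

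Concretely the steps are: (i) record the $z$-scaling of each factor $\Theta$, $L$, $L_0$ from their definitions; (ii) prove the $z$-degree bound for the coefficients of $\mathsf{J}^\lambda$ in the Nakajima basis, which is quoted in the text; (iii) run the recursion defining $\mathsf{Y}^\lambda(q)$ from $\mathsf{M}_D$ and show by induction on the $q$-order that the $z$-degree bound is preserved, with linear growth in the $q$-order; (iv) combine (i)--(iii) to conclude that $\Theta \mathsf{Y}_z L^{-1}$ has entries in $1 + (1/z)\,\CC(t_1,t_2)[[1/z]]$ up to the analytic $q$-dependence, and invoke the convergence of $\mathsf{Y}^\lambda(q)$ for $|q|<1$ to get analyticity of the coefficient functions. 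The main obstacle is step (iii): one must check that the interplay between the energy-type terms $(t_1+t_2)\frac{k}{2}\frac{(-q)^k+1}{(-q)^k-1}\alpha_{-k}\alpha_k$, which carry a single power of $(t_1+t_2)$ but rational functions of $q$, and the cubic terms $t_1t_2\,\alpha_{k+l}\alpha_{-k}\alpha_{-l}$ and $\alpha_{-k-l}\alpha_k\alpha_l$, which change $\ell(\mu)$ by $\pm 1$, does not violate the bound — in particular that the $\ell$-lowering cubic term, which after rescaling carries \emph{no} power of $z$ while $\Theta$ supplies one fewer power of $z$ on its output, still balances against $L^{-1}$. This amounts to verifying that the combined operator $z\,\mathsf{M}_D(t_1/z,t_2/z)$ conjugated by $\Theta$ is a $z$-independent (or bounded-in-$1/z$) operator on $\cF\otimes\CC(t_1,t_2)$, which is exactly the content of the Lemma preceding Proposition \ref{ll44} together with the homogeneity of $\mathsf{M}_D$ under the simultaneous rescaling of $t_1,t_2$ and the $\ell$-grading; once that is in hand, the induction closes immediately.
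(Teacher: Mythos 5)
Your opening computation at $q=0$ is correct and matches the paper: the homogeneity of the Jack coefficients makes the rescaling $t_i\mapsto t_i/z$ cancel against $\Theta$ and $L^{-1}$, so the $\lambda$-column of $\Theta\mathsf{Y}_zL^{-1}$ at $q=0$ is the $z$-independent vector $\mathsf{J}^\lambda/\|\mathsf{J}^\lambda\|$; and your closing observation that $\Theta$-conjugation turns $\mathsf{M}_D(t_1/z,t_2/z)$ into $\tfrac1z\mathsf{M}_D(t_1,t_2)$ is exactly Proposition \ref{ll44}. The gap is in step (iii), the propagation in $q$. First, the bookkeeping claim is false as stated: the $q^m$-coefficients of $\mathsf{Y}^\lambda$ are obtained by inverting $m-c(\lambda;t_1,t_2)-\mathsf{M}_D(0)$, whose eigenvalues are the \emph{non-homogeneous} linear forms $m+c(\mu;t_1,t_2)-c(\lambda;t_1,t_2)$. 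After the substitution $t_i\mapsto t_i/z$ such a denominator becomes $\frac1m\sum_{i\geq 0}\bigl(-\tfrac{c(\mu)-c(\lambda)}{mz}\bigr)^i$, an \emph{infinite} series in $1/z$; so the entries of $\Theta\mathsf{Y}_zL^{-1}$ at a fixed $q$-order do not lie in $\CC(t_1,t_2)[1/z]$ with degree growing linearly in the $q$-order, and the induction you describe does not close in that form.

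Second, and more seriously, even with the corrected per-order statement (each $q^m$-coefficient is a formal $1/z$-series), the assertion that ``the full series in $q$ converges \dots\ to an analytic function whose expansion in $1/z$ is itself convergent'' is precisely the content of the Proposition and is not supplied by invoking convergence of $\mathsf{Y}^\lambda(q)$ for $|q|<1$: that convergence concerns the unrearranged series at fixed values of $t_1,t_2$, whereas you need, for each fixed power of $1/z$, convergence of the rearranged double series in $q$, which requires uniform estimates you never establish. The paper avoids this entirely: it observes that the column $\Phi^\lambda$ of $\Theta\mathsf{Y}_zL^{-1}$ solves $zq\frac{d}{dq}\Phi^\lambda=(\mathsf{M}_D(q)+c(\lambda;t_1,t_2))\Phi^\lambda$ with the $z$-free initial condition $\mathsf{J}^\lambda/\|\mathsf{J}^\lambda\|$ at $q=0$ (the computation you did), and then, by uniqueness, identifies $\Phi^\lambda$ with the Peano--Baker iterated-integral solution, which is manifestly a power series in $1/z$ with analytic coefficients. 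To repair your argument you would either have to prove the needed uniform (majorant or Cauchy-type) estimates on the $q$-recursion after rescaling, or pass, as the paper does, to an integral-series representation of the solution.
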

\begin{proof}
Let $\Phi^\lambda$ be the column of $\Theta\mathsf{Y}_zL^{-1}$ indexed by $\lambda$. By construction of $\mathsf{Y}$, 
\begin{equation*}
 \Theta\mathsf{Y}_zL^{-1}\Big|_{q=0}=\Theta\mathsf{J}_zL^{-1}, 
\end{equation*}
hence $\Phi^\lambda\Big|_{q=0}=\Theta\mathsf{J}_z^\lambda z^{|\lambda|}\prod_{\mathsf{w}: \text{ tangent weights at }\lambda} \mathsf{w}^{-1/2}$. 
Write $\mathsf{J}^\lambda=\sum_\epsilon \mathsf{J}^\lambda_\epsilon(t_1, t_2) |\epsilon\rangle$. Then we have 
\begin{equation*}
\begin{split}
\Theta\mathsf{J}_z^\lambda z^{|\lambda|}
&=\sum_\epsilon \mathsf{J}^\lambda_\epsilon(t_1/z, t_2/z)z^{\ell(\epsilon)}z^{|\lambda|} |\epsilon\rangle\\
&=\sum_\epsilon \mathsf{J}^\lambda_\epsilon(t_1, t_2)z^{-2\ell(\epsilon)}z^{\ell(\epsilon)-|\lambda|}z^{\ell(\epsilon)}z^{|\lambda|} |\epsilon\rangle=\mathsf{J}^\lambda.
\end{split}
\end{equation*}
Together with (\ref{eqn:jack_lengths}), we find $\Phi^\lambda\Big|_{q=0}=\mathsf{J}^\lambda/||\mathsf{J}^\lambda||$.

Since  $\mathsf{S}$ is a solution to (\ref{qde_z}), $\Phi^\lambda$ is a solution to the differential equation 
\begin{equation}\label{qde_l}
zq\frac{d}{dq}\Phi^\lambda=(\mathsf{M}_D+c(\lambda; t_1, t_2))\Phi^\lambda.
\end{equation} 
By uniqueness of solutions to (\ref{qde_l}) with given initial conditions, $\Phi^\lambda$ can also be constructed using the Peano-Baker series (see  \cite{bs}) with the initial condition $$\Phi^\lambda\Big|_{q=0}=\mathsf{J}^\lambda/||\mathsf{J}^\lambda||\,. $$ As the Peano-Baker series is manifestly a power series in $z^{-1}$ with analytic coefficients, the Proposition follows.
\end{proof}

\section{Descendent Gromov-Witten theory}

\subsection{Hilbert schemes} \label{HSHS}
Let $\mathsf{S}^\text{Hilb}(q,t_D)$ be the generating series of genus $0$ descendent Gromov-Witten invariants of $\hilbnc$,
\begin{equation}\label{eqn:S_hilb}
\eta(a, \mathsf{S}^\text{Hilb}(q,t_D)b)=\eta (a,b)+\sum_{k\geq 0}z^{-1-k}\sum_{m, d}\frac{q^d}{m!}\langle a, \underbrace{t_D D,...,t_D D}_{m}, b\psi_{m+2}^k\rangle_{0,d}^{\hilbnc}    
\end{equation}
By definition, $\mathsf{S}^\text{Hilb}$ is a formal power series in $1/z$ whose coefficients are in $\text{End}(\mathcal{F})[t_D][[q]]$, written in the basis $\{|\mu\rangle\}$. $\mathsf{S}^\text{Hilb}(q,t_D)$ satisfies the following two differential equations:
\begin{equation}\label{eqn:qde_hilb_D}
z\frac{\partial}{\partial t_D}\mathsf{S}^\text{Hilb}(q,t_D)=(D\star_{t_D})\mathsf{S}^\text{Hilb}(q,t_D),    
\end{equation}
\begin{equation}\label{eqn:div_hilb}
zq\frac{\partial}{\partial q}\mathsf{S}^\text{Hilb}(q,t_D)-z\frac{\partial}{\partial t_D}\mathsf{S}^\text{Hilb}(q,t_D)=-\mathsf{S}^\text{Hilb}(q,t_D)(D\cdot).
\end{equation}
Here $(D\star_{t_D})=(D\star_{t_D D})$ is the operator of quantum multiplication by the divisor $D$ at the point\footnote{We use $t_D$ to denote the coordinate of $D$.} $t_D D$, 
\begin{equation*}
\eta((D\star_{t_D})a, b)=\sum_{m\geq 0, d\geq 0}\frac{q^d}{m!}\langle D, a, \underbrace{t_D D,...,t_D D}_{m}, b \rangle_{0,d}^{\hilbnc},    
\end{equation*}
and $(D\cdot)$ is the operator of classical cup product by $D$. In particular,
\begin{equation}\label{eqn:oper_D_0}
 (D\star_{t_D})\Big|_{t_D=0}=\mathsf{M}_D(q), \quad (D\cdot)=(D\cdot)\Big|_{t_D=0}=\mathsf{M}_D(0)\, .
\end{equation}
Equation (\ref{eqn:qde_hilb_D}) follows from the topological recursion relations in genus $0$. Equation (\ref{eqn:div_hilb}) follows from the divisor equations for {\em descendent} Gromov-Witten invariants.

We first determine $\mathsf{S}^\text{Hilb}\Big|_{t_D=0}$. Combining (\ref{eqn:qde_hilb_D}) and (\ref{eqn:div_hilb}) and setting $t_D=0$, we find 
\begin{equation*}
zq\frac{\partial}{\partial q}\left(\mathsf{S}^\text{Hilb}\Big|_{t_D=0}\right)=\mathsf{M}_D(q)\left(\mathsf{S}^\text{Hilb}\Big|_{t_D=0}\right)-\left(\mathsf{S}^\text{Hilb}\Big|_{t_D=0}\right)\mathsf{M}_D(0)\, .    
\end{equation*}
So, we see
\begin{equation*}
\begin{split}
zq\frac{\partial}{\partial q}\left(\mathsf{S}^\text{Hilb}\Big|_{t_D=0}\mathsf{J}^\lambda/||\mathsf{J}^\lambda||\right)&=\mathsf{M}_D(q)\left(\mathsf{S}^\text{Hilb}\Big|_{t_D=0}\mathsf{J}^\lambda/||\mathsf{J}^\lambda||\right)-\left(\mathsf{S}^\text{Hilb}\Big|_{t_D=0}\right)\mathsf{M}_D(0)\mathsf{J}^\lambda/||\mathsf{J}^\lambda||\\
&=\mathsf{M}_D(q)\left(\mathsf{S}^\text{Hilb}\Big|_{t_D=0}\mathsf{J}^\lambda/||\mathsf{J}^\lambda||\right)+c(\lambda;t_1,t_2)\left(\mathsf{S}^\text{Hilb}\Big|_{t_D=0}\mathsf{J}^\lambda/||\mathsf{J}^\lambda||\right).    
\end{split}
\end{equation*}
Since $\mathsf{S}^\text{Hilb}\Big|_{t_D=0, q=0}=\text{Id}$, we have $\left(\mathsf{S}^\text{Hilb}\Big|_{t_D=0}\mathsf{J}^\lambda/||\mathsf{J}^\lambda||\right)\Big|_{q=0}=\mathsf{J}^\lambda/||\mathsf{J}^\lambda||$. Comparing the result with the proof of Proposition \ref{prop:qde_soln1}, we conclude 
\begin{equation*}
 \mathsf{S}^\text{Hilb}\Big|_{t_D=0}\mathsf{J}^\lambda/||\mathsf{J}^\lambda||=\Phi^\lambda,   
\end{equation*}
as $\cF$-valued power series.

Let $\mathsf{A}:\cF\to \cF$ be defined by $\mathsf{A}(e_\lambda)=\mathsf{J}^\lambda/||\mathsf{J}^\lambda||$ . The above discussion
yields the following result.

\begin{prop}\label{prop:S_hilb0}
As power series in $1/z$, we have $\mathsf{S}^\text{Hilb}\Big|_{t_D=0}\mathsf{A}=\mathsf{S}L_0^{-1}$. 
\end{prop}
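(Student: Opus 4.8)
The plan is to deduce Proposition~\ref{prop:S_hilb0} by a uniqueness-of-solutions argument, identifying both sides as the unique solution to the same linear ODE with the same initial condition at $q=0$. Concretely, I would proceed as follows.

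\textbf{Step 1: Both sides solve the same differential equation.} By the displayed computation preceding the Proposition, $\mathsf{S}^\text{Hilb}\big|_{t_D=0}$ satisfies
\[
zq\frac{\partial}{\partial q}\left(\mathsf{S}^\text{Hilb}\Big|_{t_D=0}\right)=\mathsf{M}_D(q)\left(\mathsf{S}^\text{Hilb}\Big|_{t_D=0}\right)-\left(\mathsf{S}^\text{Hilb}\Big|_{t_D=0}\right)\mathsf{M}_D(0)\, ,
\]
and applying this to the vector $e_\lambda$ after right-multiplication by $\mathsf{A}$ (which sends $e_\lambda\mapsto \mathsf{J}^\lambda/||\mathsf{J}^\lambda||$, an eigenvector of $\mathsf{M}_D(0)$ with eigenvalue $-c(\lambda;t_1,t_2)$) shows that the column $\left(\mathsf{S}^\text{Hilb}\big|_{t_D=0}\mathsf{A}\right)(e_\lambda)$ solves exactly equation (\ref{qde_l}), i.e. $zq\frac{d}{dq}\Phi=(\mathsf{M}_D+c(\lambda;t_1,t_2))\Phi$. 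On the other hand, $\mathsf{S}=\Theta\mathsf{Y}_zL^{-1}L_0$ is a solution of (\ref{qde_z}) by (\ref{eqn:sol_des}), and $L_0$ is the diagonal matrix with entries $q^{-c(\lambda;t_1,t_2)/z}$; thus the $\lambda$-column of $\mathsf{S}L_0^{-1}=\Theta\mathsf{Y}_zL^{-1}$ equals $\Phi^\lambda$, which by the proof of Proposition~\ref{prop:qde_soln1} also solves (\ref{qde_l}). Hence the $\lambda$-columns of $\mathsf{S}^\text{Hilb}\big|_{t_D=0}\mathsf{A}$ and of $\mathsf{S}L_0^{-1}$ satisfy the same ODE for each $\lambda$.

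\textbf{Step 2: Both sides have the same initial value at $q=0$.} Since $\mathsf{S}^\text{Hilb}\big|_{t_D=0,q=0}=\text{Id}$, we get $\left(\mathsf{S}^\text{Hilb}\big|_{t_D=0}\mathsf{A}\right)(e_\lambda)\big|_{q=0}=\mathsf{A}(e_\lambda)=\mathsf{J}^\lambda/||\mathsf{J}^\lambda||$. The proof of Proposition~\ref{prop:qde_soln1} already establishes that $\Phi^\lambda\big|_{q=0}=\mathsf{J}^\lambda/||\mathsf{J}^\lambda||$ as well. So the two candidate solutions agree at $q=0$.

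\textbf{Step 3: Conclude by uniqueness.} Equation (\ref{qde_l}) is a linear ODE with a regular singular point at $q=0$ whose residue $\frac{1}{z}(\mathsf{M}_D(0)+c(\lambda;t_1,t_2))$ annihilates $\mathsf{J}^\lambda/||\mathsf{J}^\lambda||$; consequently there is a unique solution holomorphic at $q=0$ with the prescribed value there — this is exactly the role of the Peano--Baker series invoked in Proposition~\ref{prop:qde_soln1}. Matching Steps 1 and 2, we conclude $\left(\mathsf{S}^\text{Hilb}\big|_{t_D=0}\mathsf{A}\right)(e_\lambda)=\Phi^\lambda$ for every $\lambda$, which is precisely the statement $\mathsf{S}^\text{Hilb}\big|_{t_D=0}\mathsf{A}=\mathsf{S}L_0^{-1}$ as power series in $1/z$. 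The one point requiring slight care — and the main obstacle — is ensuring the uniqueness statement applies at the level of formal power series in $z^{-1}$ with the coefficients being analytic functions of $q$ (rather than as a genuine analytic ODE), so that the Peano--Baker/recursive argument legitimately pins down the solution order-by-order in $z^{-1}$; this is handled exactly as in the proof of Proposition~\ref{prop:qde_soln1}, since both $\mathsf{S}^\text{Hilb}\big|_{t_D=0}$ and $\Theta\mathsf{Y}_zL^{-1}$ are power series in $z^{-1}$ with analytic coefficients in $q$ (the former by definition (\ref{eqn:S_hilb}), the latter by Proposition~\ref{prop:qde_soln1}), and the recursion determines the coefficients uniquely from the $q=0$ value.
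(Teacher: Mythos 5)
Your proposal is correct and follows essentially the same route as the paper: combine equations (\ref{eqn:qde_hilb_D}) and (\ref{eqn:div_hilb}) at $t_D=0$, apply the resulting equation to the eigenvector $\mathsf{J}^\lambda/||\mathsf{J}^\lambda||$ of $\mathsf{M}_D(0)$, and identify the outcome with the column $\Phi^\lambda$ of $\mathsf{S}L_0^{-1}=\Theta\mathsf{Y}_zL^{-1}$ by uniqueness of solutions of (\ref{qde_l}) with initial value $\mathsf{J}^\lambda/||\mathsf{J}^\lambda||$ at $q=0$, exactly as in the proof of Proposition \ref{prop:qde_soln1}. Your added remark that the Peano--Baker recursion pins the solution down order-by-order in $z^{-1}$ just makes explicit what the paper leaves implicit.
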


\noindent By definition, $\mathsf{S}^\text{Hilb}$ is a formal power series in $q$. By Proposition \ref{prop:S_hilb0},  $\mathsf{S}^\text{Hilb}$ is analytic in $q$.

By the divisor equation for primary Gromov-Witten invariants, we have $$q\frac{\partial}{\partial q}(D\star_{t_D})-\frac{\partial}{\partial t_D}(D\star_{t_D})=0\, .$$
A direct calculation then 
shows that the two differential operators $$z\frac{\partial}{\partial t_D}-(D\star_{t_D})\ \ \  \text{and} \ \ \  zq\frac{\partial}{\partial q}-z\frac{\partial}{\partial t_D}-(-)(D\cdot)$$ commute. Therefore, equation (\ref{eqn:qde_hilb_D}) and Proposition \ref{prop:S_hilb0} uniquely determine $\mathsf{S}^\text{Hilb}(q,t_D)$.

\subsection{Symmetric products}
We introduce another copy of the Fock space $\cF$ which we denote by $\widetilde{\cF}$. An additive isomorphism $$\widetilde{\cF}\otimes_\mathbb{C}\mathbb{C}[t_1, t_2]\simeq \bigoplus_{n\geq 0}H_\T^*(I\text{Sym}^n(\mathbb{C}^2), \mathbb{C})\, ,$$ is given by identifying $|{\mu}\rangle\in \widetilde{\cF}$ with the fundamental class $[I_\mu]$ of the component of the inertia orbifold $I\text{Sym}^n(\mathbb{C}^2)$ indexed by $\mu$. The orbifold Poincar\'e pairing $(-,-)^\text{Sym}$ induces via this identification a pairing on $\widetilde{\cF}$, $$\widetilde{\eta}(\mu, \nu)=\frac{1}{(t_1t_2)^{\ell(\mu)}}\frac{\delta_{\mu\nu}}{\mathfrak{z}(\mu)}.$$

Following \cite[Equation (1.6)]{pt}, we define $$|\widetilde{\mu}\rangle=(-\sqrt{-1})^{\ell(\mu)-|\mu|}|\mu\rangle\in \widetilde{\cF}.$$ 
We will use the following linear isomorphism 
\begin{equation}\label{eqn:F_isom}
\mathsf{C}: \cF\to \widetilde{\cF}\, , \ \ \quad |\mu\rangle\mapsto |\widetilde{\mu}\rangle\, ,
\end{equation}
which is compatible with the pairings $\eta$ and $\widetilde{\eta}$.

We recall the definition of the ramified Gromov-Witten invariants of $\text{Sym}^n(\mathbb{C}^2)$ following \cite[Section 3.2]{pt}. Consider the moduli space  $\overline{\mathcal{M}}_{g,r+b}(\text{Sym}^n(\mathbb{C}^2))$ 
of stable maps to $\text{Sym}^n(\mathbb{C}^2)$ and let $$\overline{\mathcal{M}}_{g,r,b}(\text{Sym}^n(\mathbb{C}^2))=[\left(ev_{r+1}^{-1}(I_{(2)})\cap...\cap ev_{r+b}^{-1}(I_{(2)}) \right)/\Sigma_b]$$ where the symmetric group $\Sigma_b$ acts by permuting the last $b$ marked points. Define ramified descendent Gromov-Witten invariants by 
\begin{equation*}
\left\langle \prod_{i=1}^r I_{\mu^i}\psi^{k_i}\right\rangle_{g,b}^{\text{Sym}^n(\mathbb{C}^2)}=\int_{[\overline{\mathcal{M}}_{g,r,b}(\text{Sym}^n(\mathbb{C}^2))]^{vir}}\prod_{i=1}^r ev_i^*([I_{\mu^i}])\psi^{k_i}\, .    \end{equation*}

Let $\mathsf{S}^\text{Sym}(u, \tilde{t})$ be the generating function of genus $0$ ramified descendent Gromov-Witten invariants of $\text{Sym}^n(\mathbb{C}^2)$,

\begin{equation}
\tilde{\eta}(a, \mathsf{S}^\text{Sym}(u, \tilde{t})b)=\tilde{\eta}(a,b)+\sum_{k\geq 0}z^{-1-k}\sum_{m,d}\frac{u^d}{m!}\langle a, \underbrace{\tilde{t}I_{(2)},..., \tilde{t} I_{(2)}}_{m}, b\psi_{m+2}^k \rangle_{0,d}^{\text{Sym}^n(\mathbb{C}^2)}.    
\end{equation}
By definition, $\mathsf{S}^\text{Sym}$ is a formal power series in $1/z$ whose coefficients are in $\text{End}(\widetilde{\mathcal{F}})[\tilde{t}][[u]]$, written in the basis $\{|\widetilde{\mu}\rangle\}$. $\mathsf{S}^\text{Sym}$ satisfies the following two differential equations:
\begin{equation}\label{eqn:qde_sym_D}
z\frac{\partial}{\partial \tilde{t}}\mathsf{S}^\text{Sym}(u, \tilde{t})=(I_{(2)}\star_{\tilde{t}})\mathsf{S}^\text{Sym}(u, \tilde{t})\, ,
\end{equation}
\begin{equation}\label{eqn:div_sym}
\frac{\partial}{\partial u}\mathsf{S}^\text{Sym}(u, \tilde{t})=\frac{\partial}{\partial \tilde{t}}\mathsf{S}^\text{Sym}(u, \tilde{t})\, .    
\end{equation}
Here $(I_{(2)}\star_{\tilde{t}})=(I_{(2)}\star_{\tilde{t}I_{(2)}})$ is the operator of quantum multiplication by the divisor $I_{(2)}$ at the point $\tilde{t}I_{(2)}$,
\begin{equation*}
\tilde{\eta}((I_{(2)}\star_{\tilde{t}})a, b)=\sum_{m, d}\frac{u^d}{m!}\langle I_{(2)}, a, \underbrace{\tilde{t}I_{(2)},..., \tilde{t} I_{(2)}}_{m}, b \rangle_{0,d}^{\text{Sym}^n(\mathbb{C}^2)}\, .     
\end{equation*}
Equation (\ref{eqn:qde_sym_D}) follows from the genus $0$ topological recursion relations for orbifold Gromov-Witten invariants, see  \cite{Tseng}. Equation (\ref{eqn:div_sym}) follows from divisor equations for {\em ramified} orbifold Gromov-Witten invariants, see \cite{bg}.

We first compare the operators $(D\star_{t_D D})$ and $(I_{(2)}\star_{\tilde{t}I_{(2)}})$. For simplicity, write $(2)$ for the partition $(2, 1^{n-2})$. By \cite[Theorem 4]{pt}, we have 
\begin{equation*}
\begin{split}
\langle D,\underbrace{D,...,D}_{k}, \lambda, \mu\rangle^\text{Hilb}
=&(-1)^{k+1}\langle (2),\underbrace{(2),...,(2)}_{k}, \lambda, \mu\rangle^\text{Hilb}\\
=&(-1)^{k+1}\langle (\tilde{2}),\underbrace{(\tilde{2}),...,(\tilde{2})}_{k}, \tilde{\lambda}, \tilde{\mu}\rangle^\text{Sym}\\
=&\langle -(\tilde{2}),\underbrace{-(\tilde{2}),...,-(\tilde{2})}_{k}, \tilde{\lambda}, \tilde{\mu}\rangle^\text{Sym},
\end{split}
\end{equation*}
where $(\tilde{-})$ is defined in \cite[Equation (1.6)]{pt}. Therefore, under the identification $|\mu\rangle \mapsto |\tilde{\mu}\rangle$, we have 
\begin{equation}
D\star_{t_D D}=-(\tilde{2})\star_{t_D(-(\tilde{2}))}.    
\end{equation}
Now, $$(\tilde{2})=(-i)^{n-1-n}I_{(2)}=(-i)^{-1}I_{(2)}=iI_{(2)}\, .$$
Hence we have, after $-q=e^{iu}$,  
\begin{equation}\label{eqn:oper_D_comp}
D\star_{t_D D}=(-i)I_{(2)}\star_{\tilde{t}I_{(2)}}, \quad \tilde{t}=(-i)t_D\, .    
\end{equation}

Consider now $\mathsf{S}^\text{Sym}\Big|_{\tilde{t}=0}$. By (\ref{eqn:qde_sym_D}) and (\ref{eqn:div_sym}), we have 
\begin{equation*}
z\frac{\partial}{\partial u}\mathsf{S}^\text{Sym}(u, \tilde{t})=(I_{(2)}\star_{\tilde{t}})\mathsf{S}^\text{Sym}(u, \tilde{t})\, .  \end{equation*}
Setting $\tilde{t}=0$ and using (\ref{eqn:oper_D_0}) and (\ref{eqn:oper_D_comp}), we find 
\begin{equation*}
z\frac{\partial}{\partial u}\left(\mathsf{S}^\text{Sym}\Big|_{\tilde{t}=0}\right)=i\mathsf{M}_D(-e^{iu})\left(\mathsf{S}^\text{Sym}\Big|_{\tilde{t}=0}\right)\, .    
\end{equation*}
Since $\frac{\partial}{\partial u}=iq\frac{\partial}{\partial q}$, we find that, after $-q=e^{iu}$,
\begin{equation}
zq\frac{\partial}{\partial q}\left(\mathsf{S}^\text{Sym}\Big|_{\tilde{t}=0}\right)=\mathsf{M}_D(q)\left(\mathsf{S}^\text{Sym}\Big|_{\tilde{t}=0}\right)\, .    
\end{equation}

Recall  $\mathsf{S}=\Theta\mathsf{Y}_zL^{-1}L_0$ also satisfied the same equation. We may then compare $\Theta\mathsf{Y}_zL^{-1}L_0$ and $\left(\mathsf{S}^\text{Sym}\Big|_{\tilde{t}=0}\right)$ by comparing them at $u=0$ which corresponds to $q=-1$. Set
$$B=\mathsf{S}\Big|_{q=-1}=\Theta\mathsf{Y}_zL^{-1}L_0\Big|_{q=-1}\, .$$ Since $\mathsf{S}^\text{Sym}\Big|_{\tilde{t}=0, u=0}=\text{Id}$, we have, after $-q=e^{iu}$, 

\begin{equation}\label{eqn:hilb_sym_0}
\mathsf{S}^\text{Sym}\Big|_{\tilde{t}=0}=\mathsf{C}\mathsf{S}B^{-1}\mathsf{C}^{-1}\, .
\end{equation}
By Proposition \ref{prop:S_hilb0}, we have 
\begin{equation}\label{eqn:hilb_sym_0.5}
\mathsf{C}\mathsf{S}B^{-1}\mathsf{C}^{-1}=\mathsf{C}\mathsf{S}^\text{Hilb}\Big|_{t_D=0}\mathsf{A}L_0B^{-1}\mathsf{C}^{-1}\, .
\end{equation}
Since $\mathsf{A}L_0\mathsf{A}^{-1}=q^{D/z}$, $$\mathsf{A}L_0B^{-1}=\mathsf{A}L_0\mathsf{A}^{-1}\mathsf{A}B^{-1}=q^{D/z}\mathsf{A}B^{-1}.$$
Define $\mathsf{K}=B\mathsf{A}^{-1}$.  We can then rewrite (\ref{eqn:hilb_sym_0.5}) as
\begin{equation}\label{eqn:hilb_sym_0.75}
\mathsf{S}^\text{Sym}\Big|_{\tilde{t}=0}=\mathsf{C}\mathsf{S}^\text{Hilb}\Big|_{t_D=0}q^{D/z}\mathsf{K}^{-1}\mathsf{C}^{-1}\, .
\end{equation}

By the divisor equation for orbifold Gromov-Witten invariants in \cite{bg} (see also \cite[Section 3.2]{pt}), we have $$\frac{\partial}{\partial u}(I_{(2)}\star_{\tilde{t}})-\frac{\partial}{\partial \tilde{t}}(I_{(2)}\star_{\tilde{t}})=0\,  .$$ 
A direct calculation then shows that the two differential operators $$z\frac{\partial}{\partial \tilde{t}}-(I_{(2)}\star_{\tilde{t}})\ \ \  \text{and}\ \ \  \frac{\partial}{\partial u}-\frac{\partial}{\partial \tilde{t}}$$ commute. Therefore $\mathsf{S}^\text{Sym}(u, \tilde{t})$ is uniquely determined by equation (\ref{eqn:qde_sym_D}) and $\mathsf{S}^\text{Sym}\Big|_{\tilde{t}=0}$. By (\ref{eqn:oper_D_comp}), we have 
\begin{equation*}
z\frac{\partial}{\partial t_D}-(D\star_{t_D})=i\left(z\frac{\partial}{\partial \tilde{t}}-(I_{(2)}\star_{\tilde{t}})) \right)\,, 
\end{equation*}
after $-q=e^{iu}$.    
Then equation
(\ref{eqn:hilb_sym_0.75})  implies the following result.
\begin{thm}\label{thm:S_matrices}
After $-q=e^{iu}$ and $\tilde{t}=(-i)t_D$, we have 
\begin{equation*}
\mathsf{S}^\text{\em Sym}(u, \tilde{t})=\mathsf{C}\mathsf{S}^\text{\em Hilb}(q, t_D)q^{D/z}\mathsf{K}^{-1}\mathsf{C}^{-1}.    
\end{equation*}
\end{thm}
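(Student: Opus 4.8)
The plan is to characterize $\mathsf{S}^{\text{Sym}}(u,\tilde t)$ by the uniqueness property established just above the statement: it is the unique solution of the quantum differential equation \eqref{eqn:qde_sym_D} with the prescribed restriction $\mathsf{S}^{\text{Sym}}\big|_{\tilde t=0}$ (this used that the operators $z\partial_{\tilde t}-(I_{(2)}\star_{\tilde t})$ and $\partial_u-\partial_{\tilde t}$ commute, together with ODE uniqueness in $\tilde t$ with $u$ a parameter). So, after performing the substitutions $-q=e^{iu}$ and $\tilde t=(-i)t_D$ (equivalently $t_D=i\tilde t$), it suffices to show that
\[
  P\;:=\;\mathsf{C}\,\mathsf{S}^{\text{Hilb}}(q,t_D)\,q^{D/z}\,\mathsf{K}^{-1}\,\mathsf{C}^{-1}
\]
is a legitimate power series in $1/z$ with analytic coefficients which (i) has the same restriction at $\tilde t=0$ and (ii) satisfies \eqref{eqn:qde_sym_D}. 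The well-posedness is no issue: $\mathsf{S}^{\text{Hilb}}$ is analytic in $q$ by Proposition \ref{prop:S_hilb0}, $q^{D/z}=\mathsf{A}L_0\mathsf{A}^{-1}$ (for a fixed branch) and $\mathsf{K}=B\mathsf{A}^{-1}$ are power series in $1/z$ by construction and by Proposition \ref{prop:qde_soln1}, and $q^{D/z}$, $\mathsf{K}$, $\mathsf{C}$ carry no $\tilde t$-dependence.

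Verifying (i) is immediate. Setting $\tilde t=0$ forces $t_D=0$, so
\[
  P\big|_{\tilde t=0}\;=\;\mathsf{C}\,\mathsf{S}^{\text{Hilb}}\big|_{t_D=0}\,q^{D/z}\,\mathsf{K}^{-1}\,\mathsf{C}^{-1},
\]
which is exactly $\mathsf{S}^{\text{Sym}}\big|_{\tilde t=0}$ by \eqref{eqn:hilb_sym_0.75}. This is precisely the $\tilde t=0$ identity obtained earlier from Proposition \ref{prop:S_hilb0} together with the comparison of the two fundamental solutions at $q=-1$ (encoded in $B=\mathsf{S}|_{q=-1}$ and $\mathsf{K}=B\mathsf{A}^{-1}$), so nothing new is needed here.

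For (ii), I would differentiate $P$ in $\tilde t$. Since only $\mathsf{S}^{\text{Hilb}}$ depends on $\tilde t$ — through $t_D=i\tilde t$, whence $\partial/\partial\tilde t=i\,\partial/\partial t_D$ — equation \eqref{eqn:qde_hilb_D} gives
\[
  z\frac{\partial}{\partial\tilde t}P
  \;=\;i\,\mathsf{C}\Big(z\frac{\partial}{\partial t_D}\mathsf{S}^{\text{Hilb}}\Big)q^{D/z}\mathsf{K}^{-1}\mathsf{C}^{-1}
  \;=\;i\,\mathsf{C}\,(D\star_{t_D})\,\mathsf{S}^{\text{Hilb}}\,q^{D/z}\mathsf{K}^{-1}\mathsf{C}^{-1}.
\]
Inserting a $\mathsf{C}\,\cdot\,\mathsf{C}^{-1}$ around $(D\star_{t_D})$ and feeding in the operator comparison \eqref{eqn:oper_D_comp} — under the identification $\mathsf{C}$ one has $(D\star_{t_D})=(-i)(I_{(2)}\star_{\tilde t})$ — the prefactor becomes $i(-i)=1$, so $z\,\partial_{\tilde t}P=(I_{(2)}\star_{\tilde t})\,P$, which is \eqref{eqn:qde_sym_D} for $P$. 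Combined with (i), uniqueness yields $P=\mathsf{S}^{\text{Sym}}$, i.e. the claimed formula.

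I expect no conceptual obstacle at this stage; the analytic substance (convergence in $q$, the continuation to $q=-1$, the matching of connection data) has all been packaged into Propositions \ref{prop:qde_soln1}, \ref{prop:S_hilb0} and into \eqref{eqn:hilb_sym_0.75}, so this last step is a formal comparison of two solutions of the same system. The only thing demanding care is the bookkeeping of the powers of $\sqrt{-1}$: tracking the chain rule from $\tilde t=(-i)t_D$, the orbifold normalization $(\tilde 2)=iI_{(2)}$ built into \eqref{eqn:oper_D_comp}, and the Nakajima-basis signs in $\mathsf{C}$, so that the scalar multiplying $(D\star_{t_D})$ after conjugation collapses to exactly $1$ (and not $-1$); and fixing the branch of the multivalued operator $q^{D/z}$ consistently on the two sides of $-q=e^{iu}$ so that the $q=-1$ specialization really produces $B$.
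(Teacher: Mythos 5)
Your proposal is correct and follows essentially the same route as the paper: the identity at $\tilde t=0$ is exactly \eqref{eqn:hilb_sym_0.75}, and the paper likewise concludes by the uniqueness of $\mathsf{S}^\text{Sym}$ given \eqref{eqn:qde_sym_D} and its $\tilde t=0$ restriction, using the operator identity $z\partial_{t_D}-(D\star_{t_D})=i\bigl(z\partial_{\tilde t}-(I_{(2)}\star_{\tilde t})\bigr)$ from \eqref{eqn:oper_D_comp} to see that the right-hand side solves \eqref{eqn:qde_sym_D}. Your chain-rule computation with the factor $i(-i)=1$ is just an unwound version of that same step.
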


\subsection{Proof of Theorem \ref{kkkk}}
By the definition of $B$ and Proposition \ref{prop:qde_soln1}, $\mathsf{K}$ is an $\text{End}(\mathcal{F})$-valued power series in $1/z$ of the form $$\mathsf{K}=\text{Id}+O(1/z)\, .$$ By Theorem \ref{thm:S_matrices} and the fact that $\mathsf{S}^\text{Hilb}$ and $\mathsf{S}^\text{Sym}$ are symplectic, it follows that $\mathsf{K}$ is also symplectic. 

Next, we explicitly evaluate $\mathsf{K}$. By the definition of $B$ and \cite[Theorem 4]{op29}, we have 
\begin{equation}
\begin{split}
B&=\left(\Theta\mathsf{Y}_zL^{-1}L_0\right)\Big|_{q=-1}\\
&=\frac{1}{(2\pi\sqrt{-1})^{|\cdot|}}\Theta \mathbf{\Gamma}_z\mathsf{H}_z \left(\mathsf{G}_{\text{DT}z}^{-1}L_0 \right)\Big|_{q=-1}L^{-1}\, .
\end{split}
\end{equation}
Here $|\cdot|=\sum_{k>0}\alpha_{-k}\alpha_k$ is the energy operator. $\mathsf{G}_\text{DT}$ is the diagonal matrix in the basis $\{e_\lambda\}$ with eigenvalues $$q^{-c(\lambda;t_1,t_2)}\prod_{\mathsf{w}: \text{ tangent weights at $\lambda$}}\frac{1}{\Gamma(\mathsf{w}+1)}\, ,$$ see \cite[Section 3.1.2]{op29}. The operator $\mathbf{\Gamma}$ is given by $$\mathbf{\Gamma}|\mu\rangle=\frac{(2\pi\sqrt{-1})^{\ell(\mu)}}{\prod_i \mu_i}\mathsf{G}_{\text{GW}}(t_1,t_2)|\mu\rangle\, ,$$ see \cite[Section 3.3]{op29}, where
 $$\mathsf{G}_{\text{GW}}(t_1,t_2)|\mu\rangle=\prod_ig(\mu_i, t_1)g(\mu_i, t_2)|\mu\rangle\, ,$$   and $$g(\mu_i, t_1)g(\mu_i, t_2)=\frac{\mu_i^{\mu_it_1}\mu_i^{\mu_i t_2}}{\Gamma(\mu_i t_1)\Gamma(\mu_i t_2)}\, ,$$ see \cite[Section 3.1.2]{op29}. 
Define
$$\mathbf{\Gamma}_z=\mathbf{\Gamma}\left(\frac{t_1}{z}, \frac{t_2}{z}\right).$$

Since 
\begin{equation*}
\mathsf{K}=B\mathsf{A}^{-1}=\frac{1}{(2\pi\sqrt{-1})^{|\cdot|}}\Theta \mathbf{\Gamma}_z\mathsf{H}_z \left(\mathsf{G}_{\text{DT}z}^{-1}L_0 \right)\Big|_{q=-1}L^{-1}\mathsf{A}^{-1},    
\end{equation*}
and $||\mathsf{J}^\lambda||=\prod_{\mathsf{w}: \text{ tangent weights at $\lambda$}}{\mathsf{w}^{1/2}}$, we see that $\mathsf{K}$ is the operator given by 
\begin{equation}\label{prop:op_K}
\mathsf{K}(\mathsf{J}^\lambda)= \frac{z^{|\lambda|}}{(2\pi\sqrt{-1})^{|\lambda|}}\prod_{\mathsf{w}: \text{ tangent weights at $\lambda$}}{\Gamma(\mathsf{w}/z+1)}\Theta \mathbf{\Gamma}_z\mathsf{H}^\lambda_z\, .   
\end{equation}
The proof Theorem \ref{kkkk} is
complete. \qed

\section{Descendent correspondence}
\subsection{Variables}
We compare the descendent Gromov-Witten theories of $\hilbnc$ and $\text{Sym}^n(\mathbb{C}^2)$. The following identifications will be used throughout:
\begin{equation}
-q=e^{iu}\, , \ \ \ \ \tilde{t}=(-i)t_D\, .
\end{equation}

\subsection{Genus $0$}\label{sec:genus0}
Following \cite{g}, consider the Givental spaces
\begin{equation*}
\begin{split}
&\mathcal{H}^\text{Hilb}=H_\T^*(\hilbnc)\otimes_{\mathbb{C}[t_1,t_2]}\mathbb{C}(t_1,t_2)[[q]]((z^{-1}))\, ,\\
&\mathcal{H}^\text{Sym}=H_\T^*(\text{Sym}^n(\mathbb{C}^2))\otimes_{\mathbb{C}[t_1,t_2]}\mathbb{C}(t_1,t_2)[[u]]((z^{-1}))\, ,
\end{split}    
\end{equation*}
equipped with the symplectic forms
\begin{equation*}
\begin{split}
&(f, g)^{\mathcal{H}^\text{Hilb}}=\text{Res}_{z=0} (f(-z),g(z))^\text{Hilb}\,, \  \quad f,g\in \mathcal{H}^\text{Hilb}\, ,\\
&(f,g)^{\mathcal{H}^\text{Sym}}=\text{Res}_{z=0} (f(-z), g(z))^\text{Sym}\, ,\  \quad f,g\in \mathcal{H}^\text{Sym}\, .
\end{split}    
\end{equation*}
The choice of bases $$\{|\mu\rangle \big| \mu\in \text{Part}(n)\}\subset H_\T^*(\hilbnc)\, , \ \ \quad \{|\widetilde{\mu}\rangle \big| \mu\in \text{Part}(n)\}\subset H_\T^*(\text{Sym}^n(\mathbb{C}^2))\, ,$$
yields Darboux coordinate systems $\{p_a^\mu, q_b^\nu\}$, $\{\widetilde{p}_a^\mu, \widetilde{q}_b^\nu \}$. General points of $\mathcal{H}^\text{Hilb}, \mathcal{H}^\text{Sym}$ can be written in the form
\begin{equation*}
\begin{split}
&\underbrace{\sum_{a\geq 0}\sum_\mu p_a^\mu |\mu\rangle\frac{(t_1t_2)^{\ell(\mu)}\mathfrak{z}(\mu)}{(-1)^{|\mu|-\ell(\mu)}}(-z)^{-a-1}}_{\mathbf{p}}+\underbrace{\sum_{b\geq 0}\sum_\nu q_b^\nu|\nu\rangle z^b}_{\mathbf{q}}\in \mathcal{H}^\text{Hilb}\, ,\\
&\underbrace{\sum_{a\geq 0}\sum_\mu \widetilde{p}_a^\mu |\widetilde{\mu}\rangle\frac{(t_1t_2)^{\ell(\mu)}\mathfrak{z}(\mu)}{1}(-z)^{-a-1}}_{\widetilde{\mathbf{p}}}+\underbrace{\sum_{b\geq 0}\sum_\nu \widetilde{q}_b^\nu|\widetilde{\nu}\rangle z^b}_{\widetilde{\mathbf{q}}}\in \mathcal{H}^\text{Sym}\, .
\end{split}    
\end{equation*}

Define
the Lagrangian cones associated to the generating functions of genus $0$ descendent and ancestor Gromov-Witten invariants as follows:
\begin{equation*}
\begin{split}
&\mathcal{L}^\text{Hilb}=\{(\mathbf{p}\, , \mathbf{q})\big|\mathbf{p}=d_\mathbf{q}\cF^\text{Hilb}_0 \}\subset \mathcal{H}^\text{Hilb}\, ,\quad  \mathcal{L}^\text{Hilb}_{an, t_D}=\{(\mathbf{p}, \mathbf{q})\big|\mathbf{p}=d_\mathbf{q}\cF^\text{Hilb}_{an, t_D,0}\}\subset \mathcal{H}^\text{Hilb}\, ,\\
&\mathcal{L}^\text{Sym}=\{(\widetilde{\mathbf{p}}, \widetilde{\mathbf{q}}) \big|\widetilde{\mathbf{p}}=d_{\widetilde{\mathbf{q}}}\cF^\text{Sym}_0 \}\subset \mathcal{H}^\text{Sym}\, ,\quad \mathcal{L}^\text{Sym}_{an, \tilde{t}}=\{(\widetilde{\mathbf{p}}, \widetilde{\mathbf{q}}) \big|\widetilde{\mathbf{p}}=d_{\widetilde{\mathbf{q}}}\cF^\text{Sym}_{a, \tilde{t},0}  \} \subset \mathcal{H}^\text{Sym}\, ,  \end{split}   
\end{equation*}
where 
\begin{equation*}
\begin{split}
\cF^\text{Hilb}_0(\mathbf{t})=\sum_{d,k\geq 0} \frac{q^d}{k!} \langle\underbrace{\mathbf{t}(\psi),...,\mathbf{t}(\psi)}_{k} \rangle_{0,d}^\text{Hilb}\, , \quad \cF^\text{Hilb}_{an, t_D,0}(\mathbf{t})=\sum_{d,k, l\geq 0} \frac{q^d}{k!l!} \langle\underbrace{\mathbf{t}(\bar{\psi}),...,\mathbf{t}(\bar{\psi})}_{k}, \underbrace{t_D D,..., t_D D}_{l} \rangle_{0,d}^\text{Hilb}\, ,\\
\cF^\text{Sym}_0(\widetilde{\mathbf{t}})=\sum_{b,k\geq 0} \frac{u^b}{k!} \langle\underbrace{\widetilde{\mathbf{t}}(\psi),...,\widetilde{\mathbf{t}}(\psi)}_{k} \rangle_{0,b}^\text{Sym}\, , \quad \cF^\text{Sym}_{an, \tilde{t},0}(\widetilde{\mathbf{t}})=\sum_{b,k, l\geq 0} \frac{u^b}{k!l!} \langle\underbrace{\widetilde{\mathbf{t}}(\bar{\psi}),...,\widetilde{\mathbf{t}}(\bar{\psi})}_{k}, \underbrace{t I_{(2)},..., t I_{(2)}}_{l} \rangle_{0,b}^\text{Sym}\, .
\end{split}    
\end{equation*}
Here, $\mathbf{q}=\mathbf{t}-1z$ and $\widetilde{\mathbf{q}}=\widetilde{\mathbf{t}}-1z$ are dilaton shifts.

By the descendent/ancestor relations \cite{cg}, we have 
\begin{equation*}
\mathcal{L}^\text{Hilb}=\mathsf{S}^\text{Hilb}(q, t_D)^{-1}\mathcal{L}^\text{Hilb}_{an, t_D}\,, \  \quad  \mathcal{L}^\text{Sym}=\mathsf{S}^\text{Sym}(u, \tilde{t})^{-1}\mathcal{L}^\text{Sym}_{an, \tilde{t}}\, .     
\end{equation*}
By the genus $0$ crepant
resolution correspondence proven\footnote{In particular, the results of \cite{bg} implies that $\mathcal{L}^\text{Hilb}_{an, t_D}$ is analytic in $q$.} in \cite{bg}, we have 
\begin{equation*}
\mathsf{C}\mathcal{L}^\text{Hilb}_{an, t_D}= \mathcal{L}^\text{Sym}_{an, \tilde{t}}\, .   
\end{equation*}

\begin{thm}\label{thm:genus0_corresp} We have
$\mathcal{L}^\text{Sym}=\mathsf{C}\mathsf{K}q^{-D/z}\mathcal{L}^\text{Hilb}$.
\end{thm}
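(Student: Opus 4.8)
The plan is to derive Theorem \ref{thm:genus0_corresp} by combining the two descendent/ancestor factorizations with Theorem \ref{thm:S_matrices} and the genus $0$ crepant resolution correspondence for ancestor potentials from \cite{bg}. Concretely, I would start from the identity $\mathcal{L}^\text{Sym}=\mathsf{S}^\text{Sym}(u,\tilde t)^{-1}\mathcal{L}^\text{Sym}_{an,\tilde t}$ and substitute $\mathcal{L}^\text{Sym}_{an,\tilde t}=\mathsf{C}\,\mathcal{L}^\text{Hilb}_{an,t_D}$ (valid after the change of variables $-q=e^{iu}$, $\tilde t=(-i)t_D$, which is exactly the setup under which Theorem \ref{thm:S_matrices} is stated). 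This gives $\mathcal{L}^\text{Sym}=\mathsf{S}^\text{Sym}(u,\tilde t)^{-1}\mathsf{C}\,\mathcal{L}^\text{Hilb}_{an,t_D}$, and then I would replace $\mathcal{L}^\text{Hilb}_{an,t_D}=\mathsf{S}^\text{Hilb}(q,t_D)\mathcal{L}^\text{Hilb}$ to obtain $\mathcal{L}^\text{Sym}=\mathsf{S}^\text{Sym}(u,\tilde t)^{-1}\mathsf{C}\,\mathsf{S}^\text{Hilb}(q,t_D)\mathcal{L}^\text{Hilb}$.

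The second step is to simplify the operator $\mathsf{S}^\text{Sym}(u,\tilde t)^{-1}\mathsf{C}\,\mathsf{S}^\text{Hilb}(q,t_D)$ using Theorem \ref{thm:S_matrices}. That theorem asserts $\mathsf{S}^\text{Sym}(u,\tilde t)=\mathsf{C}\,\mathsf{S}^\text{Hilb}(q,t_D)\,q^{D/z}\,\mathsf{K}^{-1}\mathsf{C}^{-1}$, so inverting gives $\mathsf{S}^\text{Sym}(u,\tilde t)^{-1}=\mathsf{C}\,\mathsf{K}\,q^{-D/z}\,\mathsf{S}^\text{Hilb}(q,t_D)^{-1}\mathsf{C}^{-1}$. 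Plugging this into the expression above, the factors $\mathsf{C}^{-1}\mathsf{C}$ and $\mathsf{S}^\text{Hilb}(q,t_D)^{-1}\mathsf{S}^\text{Hilb}(q,t_D)$ cancel, leaving precisely $\mathcal{L}^\text{Sym}=\mathsf{C}\,\mathsf{K}\,q^{-D/z}\,\mathcal{L}^\text{Hilb}$, which is the claim. The algebra here is genuinely just cancellation once the inputs are lined up.

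The point that needs care — and which I expect to be the main obstacle — is ensuring all of these operator identities act on the correct Givental spaces and that the action of operators like $\mathsf{S}^\text{Hilb}$, $q^{D/z}$, and $\mathsf{K}$ on the (infinite-dimensional, $z$-graded) cones $\mathcal{L}^\text{Hilb}$, $\mathcal{L}^\text{Hilb}_{an,t_D}$ is the one compatible with the descendent/ancestor relations of \cite{cg}. In particular, one must check that $\mathsf{S}^\text{Hilb}$ and $\mathsf{S}^\text{Sym}$ are symplectic (already noted in the proof of Theorem \ref{kkkk}) so that they genuinely preserve the cone structure, that the change of variables $-q=e^{iu}$, $\tilde t=(-i)t_D$ identifies the relevant power-series completions $\mathbb{C}[[q]]$ and $\mathbb{C}[[u]]$ appearing in $\mathcal{H}^\text{Hilb}$ and $\mathcal{H}^\text{Sym}$ after analytic continuation (this uses analyticity of $\mathsf{S}^\text{Hilb}$ in $q$, established via Proposition \ref{prop:S_hilb0}, and analyticity of $\mathcal{L}^\text{Hilb}_{an,t_D}$ from \cite{bg}), and that $\mathsf{C}$ intertwines the symplectic forms $(-,-)^{\mathcal{H}^\text{Hilb}}$ and $(-,-)^{\mathcal{H}^\text{Sym}}$ — which follows from $\mathsf{C}$ being compatible with $\eta$ and $\widetilde\eta$ as recorded in \eqref{eqn:F_isom}. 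Once these compatibilities are in place, the proof is the short chain of substitutions and cancellations described above.

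\begin{proof}
By the descendent/ancestor relations \cite{cg} recalled above,
\begin{equation*}
\mathcal{L}^\text{Sym}=\mathsf{S}^\text{Sym}(u,\tilde{t})^{-1}\mathcal{L}^\text{Sym}_{an,\tilde{t}}\, ,\qquad \mathcal{L}^\text{Hilb}_{an,t_D}=\mathsf{S}^\text{Hilb}(q,t_D)\,\mathcal{L}^\text{Hilb}\, ,
\end{equation*}
and by the genus $0$ crepant resolution correspondence of \cite{bg},
$\mathsf{C}\,\mathcal{L}^\text{Hilb}_{an,t_D}=\mathcal{L}^\text{Sym}_{an,\tilde{t}}$, under the identifications $-q=e^{iu}$ and $\tilde{t}=(-i)t_D$. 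Combining these,
\begin{equation*}
\mathcal{L}^\text{Sym}=\mathsf{S}^\text{Sym}(u,\tilde{t})^{-1}\,\mathsf{C}\,\mathcal{L}^\text{Hilb}_{an,t_D}=\mathsf{S}^\text{Sym}(u,\tilde{t})^{-1}\,\mathsf{C}\,\mathsf{S}^\text{Hilb}(q,t_D)\,\mathcal{L}^\text{Hilb}\, .
\end{equation*}
By Theorem \ref{thm:S_matrices}, $\mathsf{S}^\text{Sym}(u,\tilde{t})=\mathsf{C}\,\mathsf{S}^\text{Hilb}(q,t_D)\,q^{D/z}\,\mathsf{K}^{-1}\mathsf{C}^{-1}$, hence
\begin{equation*}
\mathsf{S}^\text{Sym}(u,\tilde{t})^{-1}=\mathsf{C}\,\mathsf{K}\,q^{-D/z}\,\mathsf{S}^\text{Hilb}(q,t_D)^{-1}\mathsf{C}^{-1}\, .
\end{equation*}
Substituting and cancelling $\mathsf{C}^{-1}\mathsf{C}$ and $\mathsf{S}^\text{Hilb}(q,t_D)^{-1}\mathsf{S}^\text{Hilb}(q,t_D)$ yields
\begin{equation*}
\mathcal{L}^\text{Sym}=\mathsf{C}\,\mathsf{K}\,q^{-D/z}\,\mathcal{L}^\text{Hilb}\, ,
\end{equation*}
as claimed. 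All operators are symplectic (for $\mathsf{K}$ this is shown in the proof of Theorem \ref{kkkk}, for $\mathsf{S}^\text{Hilb}$ and $\mathsf{S}^\text{Sym}$ it is standard, and for $\mathsf{C}$ it follows from its compatibility with $\eta$ and $\widetilde{\eta}$), so they preserve the cone structures, and the analyticity of $\mathsf{S}^\text{Hilb}$ in $q$ (Proposition \ref{prop:S_hilb0}) together with the analyticity of $\mathcal{L}^\text{Hilb}_{an,t_D}$ from \cite{bg} justifies the substitution $-q=e^{iu}$.
\end{proof}
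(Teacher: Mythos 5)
Your proposal is correct and follows essentially the same route as the paper's proof: both combine the descendent/ancestor relations of \cite{cg}, the ancestor-level correspondence $\mathsf{C}\mathcal{L}^\text{Hilb}_{an,t_D}=\mathcal{L}^\text{Sym}_{an,\tilde t}$ from \cite{bg}, and Theorem \ref{thm:S_matrices}, differing only in the order in which the substitutions are performed. The additional remarks you make on symplecticity and analyticity are consistent with the paper's setup and do not change the argument.
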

\begin{proof}
Using Theorem \ref{thm:S_matrices}, we calculate
\begin{equation*}
\begin{split}
\mathcal{L}^\text{Sym}
=&\mathsf{S}^\text{Sym}(u, \tilde{t})^{-1}\mathcal{L}^\text{Sym}_{an, \tilde{t}}\\
=&\mathsf{S}^\text{Sym}(u, \tilde{t})^{-1}\mathsf{C}\mathcal{L}^\text{Hilb}_{an, t_D}\\
=&\mathsf{C}\mathsf{K}q^{-D/z}\mathsf{S}^\text{Hilb}(q, t_D)^{-1}\mathcal{L}^\text{Hilb}_{an, t_D}\\
=&\mathsf{C}\mathsf{K}q^{-D/z}\mathcal{L}^\text{Hilb}\, . 
\end{split}    
\end{equation*}
\end{proof}

\subsection{Higher genus}\label{sec:higher_genus}
Consider the total descendent potentials, 
\begin{equation*}
\begin{split}
&\mathcal{D}^\text{Hilb}=\exp\left(\sum_{g\geq 0}\hbar^{g-1}\cF_g^\text{Hilb} \right)\, , \quad \cF^\text{Hilb}_g(\mathbf{t})=\sum_{d,k\geq 0} \frac{q^d}{k!} \langle\underbrace{\mathbf{t}(\psi),...,\mathbf{t}(\psi)}_{k} \rangle_{g,d}^\text{Hilb}\, ,\\ 
&\mathcal{D}^\text{Sym}=\exp\left(\sum_{g\geq 0}\hbar^{g-1}\cF_g^\text{Sym} \right)\, , \quad \cF^\text{Sym}_g(\widetilde{\mathbf{t}})=\sum_{b,k\geq 0} \frac{u^b}{k!} \langle\underbrace{\widetilde{\mathbf{t}}(\psi),...,\widetilde{\mathbf{t}}(\psi)}_{k} \rangle_{g,b}^\text{Sym}\, ,   
\end{split}    
\end{equation*}
and the total ancestor potentials\footnote{The results of \cite{pt} imply that $\mathcal{A}^\text{Hilb}_{an, t_D}$ depends analytically in $q$.},
\begin{equation*}
\begin{split}
&\mathcal{A}^\text{Hilb}_{an, t_D}=\exp\left(\sum_{g\geq 0}\hbar^{g-1}\cF_{an, t_D, g}^\text{Hilb} \right)\, , \quad \cF^\text{Hilb}_{an, t_D,g}(\mathbf{t})=\sum_{d,k, l\geq 0} \frac{q^d}{k!l!} \langle\underbrace{\mathbf{t}(\bar{\psi}),...,\mathbf{t}(\bar{\psi})}_{k}, \underbrace{t_D D,..., t_D D}_{l} \rangle_{g,d}^\text{Hilb}\, ,\\ 
&\mathcal{A}^\text{Sym}_{an, \tilde{t}}=\exp\left(\sum_{g\geq 0}\hbar^{g-1}\cF_{an, \tilde{t}, g}^\text{Sym} \right)\, , \quad \cF^\text{Sym}_{an, \tilde{t},g}(\widetilde{\mathbf{t}})=\sum_{b,k, l\geq 0} \frac{u^b}{k!l!} \langle\underbrace{\widetilde{\mathbf{t}}(\bar{\psi}),...,\widetilde{\mathbf{t}}(\bar{\psi})}_{k}, \underbrace{t I_{(2)},..., t I_{(2)}}_{l} \rangle_{g,b}^\text{Sym}\, .
\end{split}    
\end{equation*}

Givental's quantization formalism \cite{g} produces differential operators by quantizing quadratic Hamiltonians associated to linear symplectic transforms by the following rules:
\begin{equation*}
\begin{split}
&\widehat{q_a^\mu q_b^\nu}=\frac{q_a^\mu q_b^\nu}{\hbar}, \widehat{q_a^\mu p_b^\nu}=q_a^\mu \frac{\partial}{\partial q_b^\nu}, \widehat{p_a^\mu p_b^\nu}=\hbar\frac{\partial}{\partial q_a^\mu}\frac{\partial}{\partial q_b^\nu}\, ,\\
&\widehat{\widetilde{q}_a^\mu \widetilde{q}_b^\nu}=\frac{\widetilde{q}_a^\mu \widetilde{q}_b^\nu}{\hbar}, \widehat{\widetilde{q}_a^\mu \widetilde{p}_b^\nu}=\widetilde{q}_a^\mu \frac{\partial}{\partial \widetilde{q}_b^\nu}, \widehat{\widetilde{p}_a^\mu \widetilde{p}_b^\nu}=\hbar\frac{\partial}{\partial \widetilde{q}_a^\mu}\frac{\partial}{\partial \widetilde{q}_b^\nu}\, .
\end{split}    
\end{equation*}

By the descendent/ancestor relations \cite{cg}, we have 
\begin{equation*}
\begin{split}
&\mathcal{D}^\text{Hilb}=e^{F^\text{Hilb}_1(t_D)}\widehat{\mathsf{S}^\text{Hilb}(q, t_D)^{-1}}\mathcal{A}^\text{Hilb}_{an, t_D}\, ,\\
&\mathcal{D}^\text{Sym}=e^{F^\text{Sym}_1(\tilde{t})}\widehat{\mathsf{S}^\text{Sym}(u, \tilde{t})^{-1}}\mathcal{A}^\text{Sym}_{an, \tilde{t}}\, ,
\end{split}
\end{equation*}
where $F_1^\text{Hilb}$ and $F_1^\text{Sym}$ are generating functions of genus $1$ primary invariants with insertions $D$ and $I_{(2)}$ respectively. $F_1^\text{Sym}$ and $F_1^\text{Hilb}$ can be easily matched using \cite[Theorem 4]{pt}.


\begin{thm}\label{thm:higher_genus_corresp} We have 
$e^{-F^\text{Sym}_1(\tilde{t})}\mathcal{D}^\text{Sym}=\widehat{\mathsf{C}}\widehat{\mathsf{K}}\widehat{q^{-D/z}}\left(e^{-F^\text{Hilb}_1(t_D)}\mathcal{D}^\text{Hilb}\right)$.
\end{thm}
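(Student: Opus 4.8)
The plan is to deduce the higher-genus statement from the genus-$0$ identity of Theorem \ref{thm:S_matrices} by applying Givental's quantization formalism to the descendent/ancestor relations recorded just above the statement. First I would combine the two descendent/ancestor factorizations
\[
\mathcal{D}^\text{Hilb}=e^{F^\text{Hilb}_1(t_D)}\widehat{\mathsf{S}^\text{Hilb}(q, t_D)^{-1}}\mathcal{A}^\text{Hilb}_{an, t_D}\, ,\qquad
\mathcal{D}^\text{Sym}=e^{F^\text{Sym}_1(\tilde{t})}\widehat{\mathsf{S}^\text{Sym}(u, \tilde{t})^{-1}}\mathcal{A}^\text{Sym}_{an, \tilde{t}}\, ,
\]
so that, after cancelling the genus-$1$ prefactors, the claim reduces to
\[
\widehat{\mathsf{S}^\text{Sym}(u, \tilde{t})^{-1}}\mathcal{A}^\text{Sym}_{an, \tilde{t}}
=\widehat{\mathsf{C}}\,\widehat{\mathsf{K}}\,\widehat{q^{-D/z}}\,\widehat{\mathsf{S}^\text{Hilb}(q, t_D)^{-1}}\mathcal{A}^\text{Hilb}_{an, t_D}\, .
\]
By Theorem \ref{thm:S_matrices} the symplectic transforms satisfy $\mathsf{S}^\text{Sym}(u,\tilde t)^{-1}=\mathsf{C}\,\mathsf{K}\,q^{-D/z}\,\mathsf{S}^\text{Hilb}(q,t_D)^{-1}\,\mathsf{C}^{-1}$ after the substitutions $-q=e^{iu}$, $\tilde t=(-i)t_D$; quantization is a projective homomorphism, so $\widehat{\mathsf{S}^\text{Sym}(u,\tilde t)^{-1}} = (\text{cocycle})\cdot\widehat{\mathsf{C}}\,\widehat{\mathsf{K}}\,\widehat{q^{-D/z}}\,\widehat{\mathsf{S}^\text{Hilb}(q,t_D)^{-1}}\,\widehat{\mathsf{C}^{-1}}$. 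Hence everything comes down to two things: (i) matching the ancestor potentials via $\widehat{\mathsf{C}^{-1}}\mathcal{A}^\text{Sym}_{an,\tilde t}=\mathcal{A}^\text{Hilb}_{an,t_D}$, and (ii) controlling the quantization cocycles (the $\hbar^{-1}$ quadratic-form ambiguities) that appear when one factors a product of quantized operators.

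For step (i) I would use the all-genus CohFT/$\mathsf{R}$-matrix matching of \cite{pt}: the ancestor potential is built from the CohFT by the Givental-Teleman reconstruction, and since \cite{pt} identifies the two CohFTs of $\hilbnc$ and $\text{Sym}^n(\mathbb{C}^2)$ under the linear isomorphism $\mathsf{C}$ (with the divisor/degree-shift bookkeeping already carried out in the genus-$0$ discussion), the ancestor potentials are exchanged by $\widehat{\mathsf{C}}$. Concretely, $\mathsf{C}$ is compatible with $\eta$ and $\widetilde\eta$ by \eqref{eqn:F_isom}, and the primary invariants with $D$-insertions match those with $I_{(2)}$-insertions by \cite[Theorem 4]{pt} (this is exactly what was invoked to define $\mathcal{A}^\text{Hilb}_{an,t_D}$ and its analyticity in $q$); feeding this into the reconstruction theorem gives the ancestor identity at all genera. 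For step (ii), one must check that the quantization cocycle arising from writing $\widehat{\mathsf{S}^\text{Sym,-1}}$ as a composite equals the product of the cocycles on the right-hand side, or equivalently that the leftover scalar factor is trivial (or is precisely absorbed into the genus-$1$ prefactors $e^{\pm F_1}$). This is where one uses that $\mathsf{S}^\text{Hilb}$, $\mathsf{S}^\text{Sym}$ are themselves \emph{symplectic} solutions of the QDE normalized to $\mathrm{Id}$ at $q=0$ (resp.\ $u=0$), so their quantizations carry no extra scalar, and that $q^{-D/z}$ is an exponential of a nilpotent-type operator whose cocycle is explicitly computable; the only genuinely new contribution is the cocycle of $\widehat{\mathsf{K}}$, but $\mathsf{K}$ appears on both the genus-$0$ side (Theorem \ref{thm:genus0_corresp}) and here, so its cocycle is pinned down by consistency with genus $0$.

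I expect the main obstacle to be step (ii): the careful bookkeeping of Givental's quantization cocycle under composition, together with the change of variables $-q=e^{iu}$ (which interacts nontrivially with the $z^{-1}$-expansions, the dilaton shifts $\mathbf q=\mathbf t-1z$, $\widetilde{\mathbf q}=\widetilde{\mathbf t}-1z$, and the Darboux coordinates normalized differently on the two sides — note the factor $(-1)^{|\mu|-\ell(\mu)}$ in the $\mathcal{H}^\text{Hilb}$ symplectic form versus its absence in $\mathcal{H}^\text{Sym}$). One must verify that $\mathsf{C}$ indeed intertwines the two symplectic forms on the Givental spaces (not just the finite-dimensional pairings $\eta,\widetilde\eta$), so that $\widehat{\mathsf{C}}$ makes sense and has trivial cocycle, and that the substitution $-q=e^{iu}$ does not introduce anomalous terms in the dilaton shift. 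Once these compatibilities are in hand, the proof is the formal manipulation
\[
e^{-F^\text{Sym}_1(\tilde t)}\mathcal{D}^\text{Sym}
=\widehat{\mathsf{S}^\text{Sym,-1}}\mathcal{A}^\text{Sym}_{an,\tilde t}
=\widehat{\mathsf{C}}\,\widehat{\mathsf{K}}\,\widehat{q^{-D/z}}\,\widehat{\mathsf{S}^\text{Hilb,-1}}\,\widehat{\mathsf{C}^{-1}}\mathcal{A}^\text{Sym}_{an,\tilde t}
=\widehat{\mathsf{C}}\,\widehat{\mathsf{K}}\,\widehat{q^{-D/z}}\,\widehat{\mathsf{S}^\text{Hilb,-1}}\mathcal{A}^\text{Hilb}_{an,t_D}
=\widehat{\mathsf{C}}\,\widehat{\mathsf{K}}\,\widehat{q^{-D/z}}\left(e^{-F^\text{Hilb}_1(t_D)}\mathcal{D}^\text{Hilb}\right),
\]
which is the desired identity.
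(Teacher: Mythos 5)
Your proposal is correct and follows essentially the same route as the paper: the descendent/ancestor relations of \cite{cg}, the quantized form of Theorem \ref{thm:S_matrices}, and the ancestor matching $\widehat{\mathsf{C}}\mathcal{A}^\text{Hilb}_{an, t_D}=\mathcal{A}^\text{Sym}_{an, \tilde{t}}$ supplied by \cite[Theorem 4]{pt}, assembled in exactly the chain of equalities you display at the end. The cocycle bookkeeping you flag as the main obstacle is not treated separately in the paper's (very short) proof, which simply performs the same formal manipulation, so your argument matches the published one.
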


\begin{proof}
By \cite[Theorem 4]{pt}, we have 
$\widehat{\mathsf{C}}\mathcal{A}^\text{Hilb}_{an, t_D}=\mathcal{A}^\text{Sym}_{an, \tilde{t}}$ .  Using Theorem \ref{thm:S_matrices}, we calculate
\begin{equation*}
\widehat{\mathsf{S}^\text{Sym}(u, \tilde{t})^{-1}}\mathcal{A}^\text{Sym}_{an, \tilde{t}}\\
=\widehat{\mathsf{C}}\widehat{\mathsf{K}q^{-D/z}}\widehat{\mathsf{S}^\text{Hilb}(q, t_D)^{-1}}\mathcal{A}^\text{Hilb}_{an, t_D}\, .
\end{equation*}
Therefore, we conclude
\begin{equation*}
\begin{split}
e^{-F^\text{Sym}_1(\tilde{t})}\mathcal{D}^\text{Sym}&=\widehat{\mathsf{S}^\text{Sym}(u, \tilde{t})^{-1}}\mathcal{A}^\text{Sym}_{an, \tilde{t}}\\    
&=\widehat{\mathsf{C}}\widehat{\mathsf{K}q^{-D/z}}\widehat{\mathsf{S}^\text{Hilb}(q, t_D)^{-1}}\mathcal{A}^\text{Hilb}_{an, t_D}\\
&=\widehat{\mathsf{C}}\widehat{\mathsf{K}q^{-D/z}}\left(e^{-F^\text{Hilb}_1(t_D)}\mathcal{D}^\text{Hilb}\right)\, .
\end{split}
\end{equation*}
\end{proof}

\section{Fourier-Mukai transformation}
\subsection{Proof
of  Theorem \ref{thm:comparison}}
We first 
localize the top row 
of 
the diagram of Theorem
\ref{thm:comparison}:
\begin{displaymath}
    \xymatrix{ 
    K_\T(\hilbnc)_{\text{loc}} \ar[r]^{\mathbb{FM}}\ar[d]_{\Psi^\text{Hilb}} & K_\T(\text{Sym}^n(\mathbb{C}^2))_{\text{loc}}\ar[d]^{\Psi^\text{Sym}} \\
   {\widetilde{\mathcal{H}}^\text{Hilb}}\ar[r]^{\mathsf{C}\mathsf{K}\big|_{z\mapsto -z}}& {\widetilde{\mathcal{H}}^\text{Sym}}\, .}
\end{displaymath}
Here, $\text{loc}$ denotes tensoring by $\text{Frac}(R(\T))$, the field of fractions of the representation ring $R(\T)$ of the torus $\T$. The maps
$\Psi^{\text{Hilb}}$
and $\Psi^{\text{Sym}}$
are still well-defined since
the $\T$-equivariant
Chern character of
a representation is invertible.
The commutation of the above
diagram immediately
implies the commutation
of the diagram of Theorem
\ref{thm:comparison}.

Let $k_\lambda\in K_{\mathsf{T}}(\hilbnc)$ be the skyscraper sheaf supported on the fixed point indexed by $\lambda$. The set 
$\{ k_\lambda \big| 
\lambda\in \text{Part}(n)\}$ is a basis of $K_{\mathsf{T}}(\hilbnc)_{\text{loc}}$ as a $\text{Frac}(R(\T))$-vector space. 
The commutation of
the localized diagram is then a consequence
of the following equality: for all $\lambda \in \text{Part}(n)$,  
\begin{equation}\label{eqn:FM_vs_K}
\mathsf{CK}\big|_{z\mapsto -z}\circ \Psi^\text{Hilb}(k_\lambda)= \Psi^\text{Sym}\circ \mathbb{FM}(k_\lambda)\, .  
\end{equation}
To prove (\ref{eqn:FM_vs_K}), we will match the two sides
by explicit calculation.

\subsection{Iritani's Gamma class}
For a vector bundle $\mathcal{V}$ on a Deligne-Mumford stack $\mathcal{X}$,
$$\mathcal{V} \rightarrow \mathcal{X}\, ,$$
Iritani has defined a characteristic class called the {\em Gamma class}. Let $$I\mathcal{X}=\coprod_i \mathcal{X}_i$$ be the decomposition of the inertia stack $I\mathcal{X}$ into connected components.
By pulling back $\mathcal{V}$ to $I\mathcal{X}$ and restricting to $\mathcal{X}_i$, we obtain a vector bundle $\mathcal{V}\big|_{\mathcal{X}_i}$ on $\mathcal{X}_i$. The stabilizer element $g_i$ of $\mathcal{X}$ associated to the component $\mathcal{X}_i$ acts on $\mathcal{V}_{\mathcal{X}_i}$. The bundle
$\mathcal{V}\big|_{\mathcal{X}_i}$ 
decomposes under $g_i$ into a direct sum of eigenbundles $$\mathcal{V}\big|_{\mathcal{X}_i}=\oplus_{0\leq f<1} \mathcal{V}_{i,f}\, ,$$ where $g_i$ acts on $\mathcal{V}_{i,f}$ by multiplication by $\exp(2\pi\sqrt{-1}f)$. The orbifold Chern character of $\mathcal{V}$ is defined to be
\begin{equation}\label{eqn:orb_ch}
    \widetilde{\ch}(\mathcal{V})=\bigoplus_i \sum_{0\leq f<1}\exp(2\pi\sqrt{-1}f)\, \ch(\mathcal{V}_{i,f})\in H^*(I\mathcal{X})\,,
\end{equation}
where $\ch(-)$ is the usual Chern character. 

For each $i$ and $f$, let $\delta_{i, f, j}$, for $1\leq j\leq \text{rank}\, \mathcal{V}_{i,f}$, be the Chern roots of $\mathcal{V}_{i,f}$. Iritani's Gamma class\footnote{The substitution of cohomology classes into Gamma function makes sense because the Gamma function $\Gamma(1+x)$ has a power series expansion at $x=0$.} is defined to be 
\begin{equation}
\Gamma (\mathcal{V})=\bigoplus_i \prod_{0\leq f<1}\prod_{j=1}^{\text{rank}\, \mathcal{V}_{i,f}}\Gamma (1-f+\delta_{i,f,j})\, .    
\end{equation}
As usual, $\Gamma_\mathcal{X}=\Gamma(T\mathcal{X})$.

If the vector bundle $\mathcal{V}$ is equivariant with respect to a $\T$-action, the Chern character and Chern roots above should be replaced by their equivariant counterparts to define a
$\T$-equivariant Gamma class.

If $\mathcal{X}$ is a scheme, then
the Gamma class simplifies
considerably since there
are no stabilizers.
Directly from the definition,
the restriction of $\Gamma_\text{Hilb}$ to the fixed point indexed by $\lambda$ is 
\begin{equation*}
\Gamma_\text{Hilb}\Big|_{\lambda}=\prod_{\mathsf{w}: \text{ tangent weights at $\lambda$}}\Gamma(\mathsf{w}+1)\, .    
\end{equation*}

Recall that the inertia stack $I\text{Sym}^n(\mathbb{C}^2)$ is a disjoint union indexed by conjugacy classes of $S_n$. For a partition $\mu$ of $n$, the component $I_\mu\subset I\text{Sym}^n(\mathbb{C}^2)$ indexed by the conjugacy class of cycle type $\mu$ is the stack quotient $$[\mathbb{C}^{2n}_\sigma/C(\sigma)]\, ,$$ where $\sigma\in S_n$ has cycle type $\mu$, $\mathbb{C}^{2n}_\sigma\subset \mathbb{C}^{2n}$ is the $\sigma$-invariant part, and $C(\sigma)\subset S_n$ is the centralizer of $\sigma$.
\begin{lem}\label{lem:gamma_sym}
The restriction of $\Gamma_\text{\em Sym}$ to the component $I_\mu$ is given by 
\begin{equation*}
\Gamma_\text{\em Sym}\Big|_{\mu}=(t_1t_2)^{\ell(\mu)}(2\pi)^{n-\ell(\mu)}\left(\prod_i \mu_i\right) \left(\prod_i \mu_i^{-\mu_i t_1}\mu_i^{-\mu_i t_2}\right)\left(\prod_i \Gamma(\mu_i t_1)\Gamma(\mu_i t_2)\right).       
\end{equation*}
\end{lem}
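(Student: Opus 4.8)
The plan is to compute $\Gamma_\text{Sym}|_\mu$ directly from Iritani's definition applied to the tangent bundle of $\text{Sym}^n(\mathbb{C}^2)$ restricted to the component $I_\mu = [\mathbb{C}^{2n}_\sigma/C(\sigma)]$, where $\sigma$ has cycle type $\mu$. First I would recall that the tangent bundle $T\text{Sym}^n(\mathbb{C}^2)$ restricted to $I_\mu$ decomposes, under the action of $\sigma$, into eigenbundles $T_{f}$ indexed by the eigenvalues $\exp(2\pi\sqrt{-1}f)$, $0\le f<1$. Since we are working $\T$-equivariantly over a point-like base (the coarse space of $I_\mu$ contributes no nontrivial Chern roots, only the $\T$-weights enter), the Chern roots $\delta$ are exactly the $\T$-weights appearing in each eigenspace. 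Concretely, for a single $d$-cycle the $\sigma$-action on the corresponding $\mathbb{C}^{2d}$ has eigenvalues the $d$-th roots of unity, each occurring with the two torus weights $t_1$ and $t_2$; the $\sigma$-fixed part is the $1$-eigenspace, spanned by the diagonal with weights $t_1,t_2$. So for a cycle of length $d=\mu_i$ the eigenvalue data is: for $k=0,1,\dots,d-1$, the fraction $f=\{k/d\}$ carries weights $t_1$ and $t_2$.

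The key computational step is then to evaluate
\begin{equation*}
\Gamma_\text{Sym}\Big|_\mu = \prod_i \prod_{k=0}^{\mu_i-1}\Gamma\!\left(1-\tfrac{k}{\mu_i}+\tfrac{\mu_i t_1}{\mu_i}\cdot[k=0]\right)\cdots
\end{equation*}
— more precisely, for each cycle $\mu_i$ the contribution is $\prod_{k=0}^{\mu_i-1}\Gamma(1-\{k/\mu_i\}+t_1\cdot\varepsilon_k)\,\Gamma(1-\{k/\mu_i\}+t_2\cdot\varepsilon_k)$, where $\varepsilon_k$ accounts for the age shift: the weight $t_j$ sits in the $f=0$ eigenspace only on the diagonal. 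I would organize this by separating the $k=0$ term (which gives $\Gamma(1+t_1)\Gamma(1+t_2)$, untwisted) from the $k\ge 1$ terms (pure twist sectors with no base weight, giving $\prod_{k=1}^{\mu_i-1}\Gamma(1-k/\mu_i)^2$), and then applying the Gauss multiplication formula
\begin{equation*}
\prod_{k=0}^{d-1}\Gamma\!\left(z+\tfrac{k}{d}\right)=(2\pi)^{(d-1)/2}\, d^{1/2-dz}\,\Gamma(dz)
\end{equation*}
with the reflection formula $\Gamma(x)\Gamma(1-x)=\pi/\sin(\pi x)$ to convert $\prod_{k=0}^{\mu_i-1}\Gamma(1-k/\mu_i+t_1)$ into a closed form. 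The relevant specialization produces, per cycle, a factor $(2\pi)^{\mu_i-1}\cdot\mu_i\cdot\mu_i^{-\mu_i t_1}\mu_i^{-\mu_i t_2}\cdot\Gamma(\mu_i t_1)\Gamma(\mu_i t_2)$, and taking the product over all parts $i$ yields the stated formula, since $\sum_i(\mu_i-1)=n-\ell(\mu)$, $\prod_i(\text{the }t_1t_2\text{-prefactor})=(t_1t_2)^{\ell(\mu)}$ coming from the $\mu_i$-factors being balanced against normalization, and $\prod_i\mu_i$ collects the remaining integer factors.

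The main obstacle I anticipate is bookkeeping the age shift and the torus weights correctly together: one must be careful that in Iritani's formula the argument of $\Gamma$ is $1-f+\delta$ where $f$ is the age fraction of that eigen-summand and $\delta$ its equivariant Chern root, and that the $\T$-weight $\mu_i t_j$ (rather than $t_j$) emerges only after combining the multiplication formula with the normalization built into the Nakajima-basis identification of $[I_\mu]$ — i.e., matching the convention under which $\Gamma_\text{Sym}$ is paired later in $\Psi^\text{Sym}$. A secondary subtlety is verifying that the non-fixed eigenspaces of $\sigma$ on $\mathbb{C}^{2n}$ genuinely contribute no $\T$-weight (they are quotiented away and only survive as twisted-sector constants), so that the product over $k\ge 1$ is a pure constant $(2\pi)^{\mu_i-1}\mu_i^{?}$ with no $t_1,t_2$ dependence. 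Once the single-cycle calculation is pinned down, the general case is just a product, and the factorization $I_\mu=\prod$ over cycles of $C(\sigma)$-orbits makes this immediate. I would therefore spend most of the write-up on the single $d$-cycle case and then state the product over $i$ as a formality.
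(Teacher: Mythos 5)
Your overall strategy (restrict the tangent bundle to $I_\mu=[\mathbb{C}^{2n}_\sigma/C(\sigma)]$, decompose into $\sigma$-eigenbundles, plug into Iritani's definition, and finish with the Gauss multiplication formula) is the paper's strategy, and your first paragraph even records the correct eigen-decomposition: for a $\mu_i$-cycle every root of unity $e^{2\pi\sqrt{-1}l/\mu_i}$ occurs in a two-dimensional eigenspace carrying the $\T$-weights $t_1$ and $t_2$. But your second and third paragraphs then abandon this and build the actual computation on the opposite claim, and that is a genuine error. You assert that $t_1,t_2$ enter only through the untwisted ($f=0$) diagonal piece, that the $k\geq 1$ eigenspaces "contribute no $\T$-weight", and that the twisted part of the product is the constant $\prod_{k=1}^{\mu_i-1}\Gamma(1-k/\mu_i)^2$. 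This is false: the twisted eigenspaces are not "quotiented away" — they are exactly the fibers of $T\mathrm{Sym}^n(\mathbb{C}^2)$ pulled back to the twisted component — and since the diagonal $\T$-action commutes with $\sigma$ it preserves each eigenspace, acting with weight $t_1$ on the $x$-directions and $t_2$ on the $y$-directions. The correct input is therefore
\begin{equation*}
\Gamma_\text{Sym}\Big|_{\mu}=\prod_i \prod_{l=0}^{\mu_i-1}\Gamma\Bigl(1-\frac{l}{\mu_i}+t_1 \Bigr)\, \Gamma\Bigl(1-\frac{l}{\mu_i}+t_2 \Bigr)\, ,
\end{equation*}
with the equivariant parameters shifting \emph{every} factor, and then the multiplication formula gives per cycle $t_j(2\pi)^{(\mu_i-1)/2}\mu_i^{1/2-\mu_i t_j}\Gamma(\mu_i t_j)$ for $j=1,2$, which assembles into the stated formula.

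With your decomposition the Lemma cannot be derived: separating off $\Gamma(1+t_1)\Gamma(1+t_2)$ and using $\prod_{k=1}^{d-1}\Gamma(k/d)=(2\pi)^{(d-1)/2}d^{-1/2}$ would give per cycle $t_1t_2\,\Gamma(t_1)\Gamma(t_2)\,(2\pi)^{\mu_i-1}/\mu_i$, with no trace of the factors $\mu_i^{-\mu_i t_j}\Gamma(\mu_i t_j)$; those can only arise when $t_j$ shifts the argument of all $\mu_i$ Gamma factors in the cycle, so the per-cycle answer you quote is inconsistent with the setup you propose to prove it from. Relatedly, the "normalization built into the Nakajima-basis identification of $[I_\mu]$" plays no role here: $\Gamma_\text{Sym}|_\mu$ is a characteristic-class computation on the inertia component and involves no choice of cohomology basis. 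Once you correct the eigenbundle weights, the rest of your plan (single-cycle computation, then product over parts, using $\sum_i(\mu_i-1)=n-\ell(\mu)$) goes through exactly as in the paper.
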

\begin{proof}
Using the description of eigenspaces of $T_{\text{Sym}^n(\mathbb{C}^2)}$ on the component of $I\text{Sym}^n(\mathbb{C}^2)$ indexed by $\mu$ (see \cite[Section 6.2]{pt}), we find that $$\Gamma_\text{Sym}\Big|_{\mu}=\prod_i \prod_{l=0}^{\mu_i-1}\Gamma\left(1-\frac{l}{\mu_i}+t_1 \right) \Gamma\left(1-\frac{l}{\mu_i}+t_2 \right).$$
Using the formula
\begin{equation*}
\prod_{k=0}^{m-1}\Gamma\left(z+\frac{k}{m}\right)=(2\pi)^{\frac{m-1}{2}}m^{\frac{1}{2}-mz}\Gamma(mz)\, ,    
\end{equation*}
we find 
\begin{equation*}
\prod_{l=0}^{\mu_i-1}\Gamma\left(1-\frac{l}{\mu_i}+t_1 \right)=t_1(2\pi)^{\frac{\mu_i-1}{2}}\mu_i^{\frac{1}{2}-\mu_it_1}\Gamma(\mu_it_1)\, ,    
\end{equation*}
and similarly for the other factor. Therefore, 
\begin{equation*}
\begin{split}
\Gamma_\text{Sym}\Big|_\mu= (t_1t_2)^{\ell(\mu)}(2\pi)^{n-\ell(\mu)}\left(\prod_i \mu_i\right) \left(\prod_i \mu_i^{-\mu_i t_1}\mu_i^{-\mu_i t_2}\right)\left(\prod_i \Gamma(\mu_i t_1)\Gamma(\mu_i t_2)\right)\, ,
\end{split}    
\end{equation*}
which is the desired formula.
\end{proof}

\subsection{Calculation of $\mathsf{CK}\circ \Psi^\text{Hilb}$}
 Since $k_\lambda$ is supported at the $\T$-fixed point of $\hilbnc$ indexed by $\lambda$, the $\T$-equivariant Chern character $\ch(k_\lambda)$ is also
 supported there. Using the Koszul resolution (or Grothendieck-Riemann-Roch), we calculate 
\begin{equation}\label{eqn:chern_kp}
\ch(k_\lambda)= {\mathsf{J}^\lambda}\prod_{\mathsf{w}: \text{ tangent weights at $\lambda$}}
\frac{1-e^{-\mathsf{w}}}
{\mathsf{w}} 
\, .
\end{equation}
We have used the fact that the class of the $\T$-fixed point of $\hilbnc$ indexed by $\lambda$ corresponds to the factor $$\frac{\mathsf{J}^\lambda}{\prod_{\mathsf{w}}\mathsf{w}}\, .$$

By the definition of $\text{deg}_0^\text{Hilb}$, we have 
\begin{equation*}
(2\pi \sqrt{-1})^{\frac{\text{deg}^\text{Hilb}_0}{2}}\ch(k_\lambda)=\frac{(2\pi \sqrt{-1})^{\frac{\text{deg}^\text{Hilb}_0}{2}}\mathsf{J}^\lambda}{\prod_\mathsf{w} 2\pi\sqrt{-1}\mathsf{w}}\prod_{\mathsf{w}: \text{ tangent weights at $\lambda$}}(1-e^{-\mathsf{2\pi \sqrt{-1} w}})\, .    
\end{equation*}
Write $\mathsf{J}^\lambda=\sum_\epsilon \mathsf{J}^\lambda_\epsilon(t_1, t_2) |\epsilon\rangle$. Since $\mathsf{J}^\lambda_\epsilon$ is $(t_1t_2)^{\ell(\epsilon)}$ times a homogeneous polynomial in $t_1, t_2$ of degree $n-\ell(\epsilon)$, we have{\footnote{The calculation also follows from the fact that
$\mathsf{J}^\lambda$ is the class a $\T$-fixed point (of real degree $4n$).}} 
\begin{equation*}
\begin{split}
(2\pi \sqrt{-1})^{\frac{\text{deg}^\text{Hilb}_0}{2}}\mathsf{J}^\lambda=&\sum_\epsilon (2\pi \sqrt{-1})^{\frac{\text{deg}^\text{Hilb}_0}{2}}\mathsf{J}^\lambda_\epsilon(t_1, t_2)|\epsilon\rangle\\
=&\sum_\epsilon \mathsf{J}^\lambda_\epsilon(2\pi \sqrt{-1}t_1, 2\pi \sqrt{-1}t_2)(2\pi \sqrt{-1})^{n-\ell(\epsilon)}|\epsilon\rangle\\
=&\sum_\epsilon \mathsf{J}^\lambda_\epsilon(t_1, t_2)(2\pi \sqrt{-1})^{n+\ell(\epsilon)}(2\pi \sqrt{-1})^{n-\ell(\epsilon)}|\epsilon\rangle\\
=&(2\pi \sqrt{-1})^{2n}\sum_\epsilon \mathsf{J}^\lambda_\epsilon(t_1, t_2) |\epsilon\rangle\\
=&(2\pi \sqrt{-1})^{2n}\mathsf{J}^\lambda.
\end{split}    
\end{equation*}
After putting the above formulas together, we obtain
\begin{equation*}
\Gamma_\text{Hilb}\cup (2\pi \sqrt{-1})^{\frac{\text{deg}^\text{Hilb}_0}{2}}\ch(k_\lambda)=\frac{(2\pi \sqrt{-1})^{2n}\mathsf{J}^\lambda}{\prod_\mathsf{w}2\pi\sqrt{-1} \mathsf{w}}\prod_{\mathsf{w}: \text{ tangent weights at $\lambda$}}\Gamma(\mathsf{w}+1)(1-e^{-\mathsf{2\pi \sqrt{-1} w}})\, .
\end{equation*}

Recall the following identity for the Gamma function:
\begin{equation}\label{eqn:gamma_identity}
\Gamma(1+t)\Gamma(1-t)=\frac{2\pi \sqrt{-1} t}{e^{\pi\sqrt{-1}t}-e^{-\pi\sqrt{-1}t}}\, .    
\end{equation}
We have 
\begin{equation*}
\begin{split}
\Gamma(\mathsf{w}+1)(1-e^{\mathsf{-2\pi \sqrt{-1} w}})
=&\Gamma(\mathsf{w}+1)(e^{\pi\sqrt{-1}\mathsf{w}}-e^{-\pi \sqrt{-1} \mathsf{w}})(e^{-\pi \sqrt{-1} \mathsf{w}})\\
=&\frac{2\pi\sqrt{-1}\mathsf{w}}{\Gamma(1-\mathsf{w})}(e^{-\pi \sqrt{-1} \mathsf{w}})\, .
\end{split}
\end{equation*}
Hence 
\begin{equation*}
\Gamma_\text{Hilb}\cup (2\pi \sqrt{-1})^{\frac{\text{deg}^\text{Hilb}_0}{2}}\ch(k_\lambda)=((2\pi \sqrt{-1})^{2n}\mathsf{J}^\lambda)\prod_{\mathsf{w}: \text{ tangent weights at $\lambda$}}\frac{1}{\Gamma(1-\mathsf{w})}e^{-\pi \sqrt{-1} \mathsf{w}}\, .    
\end{equation*}

Since the operator $z^{\rho^\text{Hilb}}$ is the operator of multiplication by $z^{c_1^\T(\hilbnc)}$, we have 
\begin{multline*}
z^{\rho^\text{Hilb}}\left(\Gamma_\text{Hilb}\cup (2\pi \sqrt{-1})^{\frac{\text{deg}^\text{Hilb}_0}{2}}\ch(k_\lambda)\right)\\
=z^{{n(t_1+t_2)}}((2\pi \sqrt{-1})^{2n}\mathsf{J}^\lambda)\prod_{\mathsf{w}: \text{ tangent weights at $\lambda$}}\frac{1}{\Gamma(1-\mathsf{w})}e^{-\pi \sqrt{-1} \mathsf{w}}\\
=z^{{n(t_1+t_2)}}e^{-\pi\sqrt{-1}n(t_1+t_2)}((2\pi \sqrt{-1})^{2n}\mathsf{J}^\lambda)\prod_{\mathsf{w}: \text{ tangent weights at $\lambda$}}\frac{1}{\Gamma(1-\mathsf{w})}\, ,
\end{multline*}
where we use $$c_1^\T(\hilbnc)\Big|_\lambda=\sum_{\mathsf{w}: \text{ tangent weights at $\lambda$}}\mathsf{w}=n(t_1+t_2)\, .$$

By the definition of $\mu^\text{Hilb}$, we have  $$z^{-\mu^{\text{Hilb}}}(\phi)=z^n z^{-\text{deg}_0^\text{Hilb}/2}(\phi)=z^n(\frac{\phi}{z^{k/2}})$$ for $\phi\in H_\T^k(\hilbnc, \mathbb{C})$, we have 
\begin{multline*}
z^{-\mu^\text{Hilb}}z^{\rho^\text{Hilb}}\left(\Gamma_\text{Hilb}\cup (2\pi \sqrt{-1})^{\frac{\text{deg}^\text{Hilb}_0}{2}}\ch(k_\lambda)\right)\\
=z^nz^{n(t_1+t_2)/z}e^{-\pi\sqrt{-1}n(t_1+t_2)/z}\left(\frac{2\pi \sqrt{-1}}{z}\right)^{2n}\mathsf{J}^\lambda \prod_{\mathsf{w}: \text{ tangent weights at $\lambda$}}\frac{1}{\Gamma(1-\mathsf{w}/z)}\, .
\end{multline*}
Here, the operator $z^{-\text{deg}_0^\text{Hilb}/2}$ acts on $z^{n(t_1+t_2)}$ as follows:
\begin{equation*}
\begin{split}
 z^{-\text{deg}_0^\text{Hilb}/2}(z^{n(t_1+t_2)})=& z^{-\text{deg}_0^\text{Hilb}/2}(e^{n(t_1+t_2)\log z})\\
 =&z^{-\text{deg}_0^\text{Hilb}/2}\left(\sum_{k\geq 0}\frac{(n(t_1+t_2)\log z)^k}{k!}\right)\\
 =&\sum_{k\geq 0} \frac{(n\log z)^k z^{-\text{deg}_0^\text{Hilb}/2}((t_1+t_2)^k)}{k!}\\
 =&\sum_{k\geq 0} \frac{(n\log z)^k ((t_1+t_2)^k/z^k)}{k!}\\
 =&\sum_{k\geq 0} \frac{(n\log z ((t_1+t_2)/z))^k}{k!}\\
 =&z^{n(t_1+t_2)/z}.
\end{split}    
\end{equation*}
The actions of $z^{-\text{deg}_0^\text{Hilb}/2}$ on $e^{-\pi\sqrt{-1}n(t_1+t_2)}$ and $\Gamma(1+\mathsf{w})$ are similarly determined.

By Equation (\ref{prop:op_K}), we have 
\begin{equation*}
\mathsf{K}\big|_{z\mapsto -z}(\mathsf{J}^\lambda)= \frac{(-z)^{|\lambda|}}{(2\pi\sqrt{-1})^{|\lambda|}}\left(\prod_{\mathsf{w}: \text{ tangent weights at $\lambda$}}\Gamma(-\mathsf{w}/z+1)\right)\Theta' \mathbf{\Gamma}_{-z}\mathsf{H}^\lambda_{-z}\, ,  \end{equation*}
where we define $\Theta' |\mu\rangle= (-z)^{\ell(\mu)}|\mu\rangle$ .
Hence,
\begin{equation*}
\begin{split}
&\mathsf{K}\big|_{z\mapsto -z}\left(z^{-\mu^\text{Hilb}}z^{\rho^\text{Hilb}}\left(\Gamma_\text{Hilb}\cup (2\pi \sqrt{-1})^{\frac{\text{deg}^\text{Hilb}_0}{2}}\ch(k_\lambda)\right)\right)\\
=&z^n z^{n(t_1+t_2)/z}e^{-\pi\sqrt{-1}n(t_1+t_2)/z}\left(\frac{2\pi \sqrt{-1}}{z}\right)^{2n}\mathsf{K}\big|_{z\mapsto -z}(\mathsf{J}^\lambda) \prod_{\mathsf{w}: \text{ tangent weights at $\lambda$}}\frac{1}{\Gamma(1-\mathsf{w}/z)}\\
=&z^n z^{n(t_1+t_2)/z}e^{-\pi\sqrt{-1}n(t_1+t_2)/z}\left(\frac{2\pi \sqrt{-1}}{z}\right)^{2n}\frac{(-z)^{|\lambda|}}{(2\pi\sqrt{-1})^{|\lambda|}}\Theta' \mathbf{\Gamma}_{-z}\mathsf{H}^\lambda_{-z} \prod_{\mathsf{w}: \text{ tangent weights at $\lambda$}}\frac{\Gamma(-\mathsf{w}/z+1)}{\Gamma(1-\mathsf{w}/z)}\\
=&(-1)^n z^n z^{n(t_1+t_2)/z}e^{-\pi\sqrt{-1}n(t_1+t_2)/z}\left(\frac{2\pi \sqrt{-1}}{z}\right)^{n}\Theta' \mathbf{\Gamma}_{-z}\mathsf{H}^\lambda_{-z}\, . 
\end{split}    
\end{equation*}

By the definition
of $\mathbf{\Gamma}_{-z}$, we have 
\begin{equation*}
\mathbf{\Gamma}_{-z}|\mu\rangle=\frac{(2\pi\sqrt{-1})^{\ell(\mu)}}{\prod_i \mu_i}\prod_i \frac{\mu_i^{-\mu_it_1/z}\mu_i^{-\mu_i t_2/z}}{\Gamma(-\mu_i t_1/z)\Gamma(-\mu_i t_2/z)}|\mu\rangle\, .
\end{equation*}
Also, $\mathsf{C}|\mu\rangle=|\widetilde{\mu}\rangle$, we thus obtain 
\begin{equation}\label{eqn:hilb_side}
\begin{split}
&\mathsf{CK}\big|_{z\mapsto -z}\left(z^{-\mu^\text{Hilb}}z^{\rho^\text{Hilb}}\left(\Gamma_\text{Hilb}\cup (2\pi \sqrt{-1})^{\frac{\text{deg}^\text{Hilb}_0}{2}}\ch(k_\lambda)\right)\right)
=\Delta^{\text{Hilb}}(\mathsf{H}^\lambda_{-z})\, ,
\end{split}    
\end{equation}
where $\Delta^\text{Hilb}: \cF\to \widetilde{\cF}$ is the operator defined as follows:
{\footnotesize{
\begin{equation}\label{eqn:delta_H}
\begin{split}
&\Delta^\text{Hilb}|\mu\rangle\\
=&(-1)^n z^n z^{n(t_1+t_2)/z}e^{-\pi\sqrt{-1}n(t_1+t_2)/z}\left(\frac{2\pi \sqrt{-1}}{z}\right)^{n}(-z)^{\ell(\mu)}\frac{(2\pi\sqrt{-1})^{\ell(\mu)}}{\prod_i \mu_i}\prod_i \frac{\mu_i^{-\mu_it_1/z}\mu_i^{-\mu_i t_2/z}}{\Gamma(-\mu_i t_1/z)\Gamma(-\mu_i t_2/z)}|\widetilde{\mu}\rangle\\
=&(-1)^{n+\ell(\mu)}z^{n(t_1+t_2)/z}e^{-\pi\sqrt{-1}n(t_1+t_2)/z}(2\pi\sqrt{-1})^{n+\ell(\mu)}z^{\ell(\mu)}\frac{1}{\prod_i \mu_i}\prod_i \frac{\mu_i^{-\mu_it_1/z}\mu_i^{-\mu_i t_2/z}}{\Gamma(-\mu_i t_1/z)\Gamma(-\mu_i t_2/z)}|\widetilde{\mu}\rangle\, . 
\end{split}
\end{equation} }}

\subsection{Haiman's result}
The homomorphism $\mathbb{FM}$ has been calculated by Haiman \cite{haiman_cdm,haiman_sym2001}. Denote by $F$ the operator of taking Frobenius series of bigraded $S_n$-modules, as defined in \cite[Definition 3.2.3]{haiman_cdm}. Note that $\T$-equivariant sheaves on $$\text{Sym}^n(\mathbb{C}^2)=[(\mathbb{C}^2)^{n}/S_n]$$ are $\T\times S_n$-equivariant sheaves on $\mathbb{C}^2$, and hence can be identified with bigraded $S_n$-equivariant $\mathbb{C}[\mathbf{x}, \mathbf{y}]$-modules\footnote{Here, $\mathbf{x}=\{x_1,...,x_n\}$ and $\mathbf{y}=\{y_1,...,y_n\}$.}. Therefore, the composition $$\Phi=F\circ \mathbb{FM}$$ makes sense and takes values in a certain algebra of symmetric functions, see \cite[Proposition 5.4.6]{haiman_cdm}. For the analysis of the
diagram of Theorem \ref{thm:comparison}, we will need the following result of Haiman.

\begin{thm}[\cite{haiman_cdm}, Equation (95)]
Let $k_\lambda\in K_\T(\hilbnc)$ be the skyscraper sheaf supported on the $\T$-fixed point indexed by $\lambda$. Then 
$$\Phi(k_\lambda)=\widetilde{H}_\lambda(z; q, t)\, .$$
\end{thm}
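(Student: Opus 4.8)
The plan is to reduce the statement to M.~Haiman's $n!$-theorem together with his computation of the bigraded characters of the Garsia--Haiman modules. First I would unwind the definitions. By \cite{bkr}, the equivalence $\mathbb{FM}$ is the Fourier--Mukai transform whose tautological kernel is the structure sheaf $\cO_{X_n}$ of Haiman's \emph{isospectral Hilbert scheme}
\[
X_n \;=\; \hilbnc\times_{\text{Sym}^n(\mathbb{C}^2)}(\mathbb{C}^2)^n \;\subset\; \hilbnc\times(\mathbb{C}^2)^n\, ,
\]
which is the universal $S_n$-cluster over $S_n\text{-Hilb}\bigl((\mathbb{C}^2)^n\bigr)=\hilbnc$, regarded as a $\T\times S_n$-equivariant sheaf; descent along $(\mathbb{C}^2)^n\to[(\mathbb{C}^2)^n/S_n]=\text{Sym}^n(\mathbb{C}^2)$ produces the map \eqref{xx99}. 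Under Haiman's dictionary between $\T\times S_n$-equivariant sheaves on $(\mathbb{C}^2)^n$ and bigraded $S_n$-equivariant $\mathbb{C}[\mathbf{x},\mathbf{y}]$-modules, the operator $F$ sends such a module to its bigraded Frobenius series, so that $\Phi=F\circ\mathbb{FM}$.

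Next I would compute $\mathbb{FM}(k_\lambda)$. Write $\rho\colon X_n\to\hilbnc$ for the projection. The key input is the $n!$-theorem of \cite{haiman_sym2001}: $X_n$ is Cohen--Macaulay (indeed Gorenstein), and $\rho$ is finite and flat of degree $n!$, so $\rho_*\cO_{X_n}$ is the locally free Procesi bundle $P$ of rank $n!$ and has no higher derived pushforwards. Since $k_\lambda$ is the skyscraper at the monomial-ideal point $I_\lambda$, flatness of $\rho$ gives Tor-independence along the slice $\{I_\lambda\}\times(\mathbb{C}^2)^n$, and $\mathbb{FM}(k_\lambda)$ is represented by the ordinary (non-derived) module $P\big|_{I_\lambda}$: the fibre of the Procesi bundle at the $\T$-fixed point $I_\lambda$, a length-$n!$ bigraded $S_n$-module.

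Then I would identify this fibre. By Haiman's fibre theorem, $P\big|_{I_\lambda}$ is isomorphic, as a bigraded $S_n$-module, to the Garsia--Haiman module attached to $\lambda$ (the space of partial derivatives of the Garsia--Haiman determinant $\Delta_\lambda$, or its graded dual), the two gradings corresponding to the two weights of $\T$, which on $K$-theory specialize to the Macdonald parameters $q,t$; the normalization of the kernel $\cO_{X_n}$ (whether one twists by the sign representation) must be tracked so as to land on the modified polynomial $\widetilde H_\lambda$ rather than a variant. Applying $F$ and invoking the central result of \cite{haiman_sym2001}, that the bigraded Frobenius series of this module is $\widetilde H_\lambda(z;q,t)$, we obtain
\[
\Phi(k_\lambda)=F\bigl(\mathbb{FM}(k_\lambda)\bigr)=F\bigl(P\big|_{I_\lambda}\bigr)=\widetilde H_\lambda(z;q,t)\, ,
\]
as claimed.

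The main obstacle — and the reason I would cite it rather than reprove it — is precisely the $n!$-theorem: the proof that $X_n$ (equivalently, the relevant polygraph rings) is Cohen--Macaulay, via a delicate simultaneous induction on the number of points and the number of sheets, is the deep geometric content, and both the fibre identification and the evaluation of the Frobenius series rest upon it. Everything else above is bookkeeping with Fourier--Mukai kernels and the equivalence between $\T\times S_n$-equivariant sheaves and bigraded $S_n$-modules.
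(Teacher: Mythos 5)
Your proposal is correct in substance, but note that the paper does not prove this statement at all: it is imported verbatim from Haiman \cite{haiman_cdm} (Equation (95)), and the chain you sketch --- the BKR kernel being $\mathcal{O}_{X_n}$ for the isospectral Hilbert scheme $X_n$, the $n!$/polygraph theorem making $\rho\colon X_n\to \hilbnc$ finite and flat so that $\mathbb{FM}(k_\lambda)$ is the honest fibre of the Procesi bundle at $I_\lambda$, the identification of that fibre with the Garsia--Haiman module, and Haiman's computation of its bigraded Frobenius series as $\widetilde H_\lambda$ --- is exactly the sequence of results in Haiman's work that underlies the cited equation. So you have reconstructed the source's own derivation rather than found a different route; the only points requiring care are the direction and normalization of the Fourier--Mukai functor and the possible sign twist (which you already flag), together with the identification of the Macdonald parameters $(q,t)$ with the torus weights, which the paper treats separately when it converts the Frobenius series operator $F$ into the orbifold Chern character.
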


The Macdonald polynomial $\widetilde{H}_\lambda(z; q, t)$ is a symmetric function in an infinite set of variables $$z=\{z_1,z_2,z_3, ...\}$$ 
and depends on two parameters $q, t$.
As explained in \cite[Section 9.1]{pt}, $\widetilde{H}_\lambda(z;q,t)$ of \cite{haiman_cdm} is the same as $\mathsf{H}^\lambda$ after the
following identification: the parameters $(q, t)$ and $(t_1, t_2)$ are related by $$(q, t)=(e^{2\pi\sqrt{-1}t_1}, e^{2\pi\sqrt{-1}t_2})\, .$$ Symmetric functions in $z$ are viewed as elements of $\widetilde{\mathcal{F}}$ via the following convention. For a partition $\mu$, the power-sum symmetric function $$p_\mu=\prod_k\big(\sum_{i\geq 1}z_i^{\mu_k}\big)$$ is identified with $\mathfrak{z}(\mu)|\mu\rangle$.

To make use of Haiman's result, we must 
compare the operator $F$  taking Frobenius series with the orbifold Chern character $\widetilde{\ch}$. Let $V^\lambda$ be the irreducible $S_n$-representation indexed by $\lambda\in \text{Part}(n)$. We construct the bigraded $S_n$-equivariant $\mathbb{C}[\mathbf{x}, \mathbf{y}]$-module $V^\lambda\otimes\mathbb{C}[\mathbf{x},\mathbf{y}]$, which is equivalent to a $\T$-equivariant sheaf $\mathcal{V}^\lambda$ on $\text{Sym}^n(\mathbb{C}^2)$. Define the operator $\delta:\widetilde{\mathcal{F}}\to \widetilde{\mathcal{F}}$ by $$\delta |\mu\rangle=\prod_i (1-q^{\mu_i})(1-t^{\mu_i})|\mu\rangle\, .$$
By \cite[Section 5.4.3]{haiman_cdm}, we have $$F_{V^\lambda\otimes\mathbb{C}[\mathbf{x},\mathbf{y}]}=s_\lambda\Big[\frac{Z}{(1-q)(1-t)}\Big],$$
where $s_\lambda$ is the Schur function. Here $Z$ denotes the collection of variables $z_1,z_2,...$ that the functions are symmetric with respect to, according to the convention of \cite{haiman_cdm}. Using the definition of plethystic substitution $Z\mapsto Z/(1-q)(1-t)$, see  \cite[Section 3.3]{haiman_cdm}, we obtain $$\delta (F_{V^\lambda\otimes\mathbb{C}[\mathbf{x},\mathbf{y}]})=s_\lambda.$$ 
On the other hand, by the definition of orbifold Chern character\footnote{The natural basis of $H_\T^*(I\text{Sym}^n(\mathbb{C}^2))$ is identified with $\{|{\mu}\rangle \big| \mu\in \text{Part}(n)\}\subset \widetilde{\cF}$.} recalled in Equation \eqref{eqn:orb_ch}, we have $$\widetilde{\ch}(\mathcal{V}^\lambda)=s_\lambda\, .$$
Since $K_\T(\text{Sym}^n(\mathbb{C}^2))$ is freely spanned as a $R(T)$-module by $V^\lambda\otimes\mathbb{C}[\mathbf{x},\mathbf{y}]$, we find $$\delta\circ F=\widetilde{\ch}\, ,$$ 
after identifying\footnote{The choice of $\T=(\mathbb{C}^*)^2$-action on $\mathbb{C}^2$ in \cite[Section 5.1.1]{haiman_cdm} is dual to ours.} $q=e^{-t_1}, t=e^{-t_2}$. Therefore, 
\begin{equation*}
\begin{split}
\widetilde{\ch}(\mathbb{FM}(k_\lambda))
=&\delta(F(\mathbb{FM}(k_\lambda)))\\
=&\delta(\Phi(k_\lambda))\\
=&\delta(\tilde{H}_\lambda)\, , \quad q=e^{-t_1}\,
, \ \ t=e^{-t_2}.
\end{split}
\end{equation*}

\subsection{Calculation of $\Psi^\text{Sym}\circ \mathbb{FM}$}
We have 
\begin{equation*}
(2\pi\sqrt{-1})^{\frac{\text{deg}^\text{Sym}_0}{2}}\widetilde{\ch}(\mathbb{FM}(k_\lambda))=\delta(\widetilde{H}_\lambda)\, , \quad q=e^{-2\pi\sqrt{-1}t_1}\, ,\ \  t=e^{-2\pi\sqrt{-1}t_2}\, .
\end{equation*}
We have used the definition of $\text{deg}^\text{Sym}_0$ and the fact that $|\mu\rangle\in\widetilde{\cF}$ as a class in $H_\T^*(I\text{Sym}^n(\mathbb{C}^2))$ has degree $0$.

By Lemma \ref{lem:gamma_sym}, we have 
\begin{equation*}
\Gamma_\text{Sym}\cup (2\pi\sqrt{-1})^{\frac{\text{deg}^\text{Sym}_0}{2}}\widetilde{\ch}(\mathbb{FM}(k_\lambda))=\delta_2(\widetilde{H}_\lambda)\, , \quad q=e^{-2\pi\sqrt{-1}t_1}\, ,\ \ t=e^{-2\pi\sqrt{-1}t_2},   
\end{equation*}
where $\delta_2: \widetilde{\mathcal{F}}\to \widetilde{\mathcal{F}}$ is defined by 
\begin{multline*}
\delta_2|\mu\rangle
=(t_1t_2)^{\ell(\mu)}(2\pi)^{n-\ell(\mu)}\left(\prod_i \mu_i\right) \left(\prod_i \mu_i^{-\mu_i t_1}\mu_i^{-\mu_i t_2}\right)\\
\times \left(\prod_i \Gamma(\mu_i t_1)\Gamma(\mu_i t_2)\right)\left(\prod_i (1-e^{-2\pi\sqrt{-1}\mu_i t_1})(1-e^{-2\pi\sqrt{-1}\mu_i t_2}) \right)|{\mu}\rangle\, .    
\end{multline*}
Since $c_1^\T(\text{Sym}^n(\mathbb{C}^2))\Big|_\mu=n(t_1+t_2)$,
we have 
\begin{equation*}
z^{\rho^\text{Sym}}\left(\Gamma_\text{Sym}\cup (2\pi\sqrt{-1})^{\frac{\text{deg}^\text{Sym}_0}{2}}\widetilde{\ch}(\mathbb{FM}(k_\lambda)) \right)=z^{n(t_1+t_2)}\delta_2(\widetilde{H}_\lambda)\, , \quad q=e^{-2\pi\sqrt{-1}t_1}\, ,\ \ t=e^{-2\pi\sqrt{-1}t_2}.
\end{equation*}

Next, we write 
\begin{equation*}
z^{-\mu^\text{Sym}}z^{\rho^\text{Sym}}\left(\Gamma_\text{Sym}\cup (2\pi\sqrt{-1})^{\frac{\text{deg}^\text{Sym}_0}{2}}\widetilde{\ch}(\mathbb{FM}(k_\lambda)) \right)=\delta_3(\mathsf{H}_{-z}^\lambda)\, ,    
\end{equation*}
where $\delta_3: \widetilde{\mathcal{F}}\to \widetilde{\mathcal{F}}$ is defined by 
\begin{multline*}
\delta_3|\mu\rangle
=z^n z^{n(t_1+t_2)/z}(t_1t_2/z^2)^{\ell(\mu)}(2\pi)^{n-\ell(\mu)}\left(\prod_i \mu_i\right)\left(\prod_i \mu_i^{-\mu_i t_1/z}\mu_i^{-\mu_i t_2/z}\right)\\
\times \left(\prod_i \Gamma(\mu_i t_1/z)\Gamma(\mu_i t_2/z)\right)\left(\prod_i (1-e^{-2\pi\sqrt{-1}\mu_it_1/z})(1-e^{-2\pi\sqrt{-1}\mu_it_2/z}) \right)z^{-(n-\ell(\mu))}|{\mu}\rangle\, .
\end{multline*}
We have used the definition of $\mu^\text{Sym}$ and the fact that $|\mu\rangle\in\widetilde{\cF}$ as a class in $H_\T^*(I\text{Sym}^n(\mathbb{C}^2))$ has age-shifted degree $2(n-\ell(\mu))$. We have
also used
\begin{equation*}
\begin{split}
z^{\text{deg}_\text{CR}/2}\big(\widetilde{H}_\lambda\big|_{ q=e^{-2\pi\sqrt{-1}t_1},\ t=e^{-2\pi\sqrt{-1}t_2}}\big)= \widetilde{H}_\lambda\big|_{q=e^{-2\pi\sqrt{-1}t_1/z},\ t=e^{-2\pi\sqrt{-1}t_2/z}}\, ,   
\end{split}    
\end{equation*}  
which is equal to $\mathsf{H}^\lambda_{-z}$.

By (\ref{eqn:gamma_identity}), we have 
\begin{equation*}
\begin{split}
\Gamma(t)\Gamma(-t)=&\frac{\Gamma(1+t)}{t}\frac{\Gamma(1-t)}{-t}\\
=&\frac{1}{-t}\frac{2\pi \sqrt{-1}}{e^{\pi\sqrt{-1}t}-e^{-\pi\sqrt{-1}t}}\\
=&\frac{2\pi\sqrt{-1}}{-t}\frac{1}{(1-e^{-2\pi\sqrt{-1}t})e^{\pi\sqrt{-1}t}}\, .  
\end{split}    
\end{equation*}
Hence 
\begin{equation*}
\Gamma(t)(1-e^{-2\pi\sqrt{-1}t})=(-1)e^{-\pi\sqrt{-1}t}2\pi\sqrt{-1} \frac{1}{t}\frac{1}{\Gamma(-t)}\, .    
\end{equation*}
We then obtain
\begin{equation*}
\begin{split}
 &\left(\prod_i \Gamma(\mu_i t_1/z)\Gamma(\mu_i t_2/z)\right)\left(\prod_i (1-e^{-2\pi\sqrt{-1}\mu_i t_1/z})(1-e^{-2\pi\sqrt{-1}\mu_i t_2/z}) \right)\\
 =&(-1)^{2\ell(\mu)}e^{-\pi\sqrt{-1}n(t_1+t_2)/z}(2\pi\sqrt{-1})^{2\ell(\mu)}\left(\prod_i \frac{z}{\mu_i t_1}\frac{z}{\mu_i t_2}\right)\left(\prod_i\frac{1}{\Gamma(-\mu_i t_1/z)\Gamma(-\mu_i t_2/z)} \right)\\
 =&(-1)^{2\ell(\mu)}e^{-\pi\sqrt{-1}n(t_1+t_2)/z}(2\pi\sqrt{-1})^{2\ell(\mu)}\left(\frac{z^2}{t_1t_2}\right)^{\ell(\mu)}\left(\prod_i \frac{1}{\mu_i}\right)^2\left(\prod_i\frac{1}{\Gamma(-\mu_i t_1/z)\Gamma(-\mu_i t_2/z)} \right).
\end{split}    
\end{equation*}
Therefore, we can write $\delta_3|\mu\rangle$ as
\begin{equation*}
\begin{split}
&z^n z^{n(t_1+t_2)/z}(t_1t_2/z^2)^{\ell(\mu)}(2\pi)^{n-\ell(\mu)}\left(\prod_i \mu_i\right)\left(\prod_i \mu_i^{-\mu_i t_1/z}\mu_i^{-\mu_i t_2/z}\right)\\
&\times (-1)^{2\ell(\mu)}e^{-\pi\sqrt{-1}n(t_1+t_2)/z}(2\pi\sqrt{-1})^{2\ell(\mu)}\left(\frac{z^2}{t_1t_2}\right)^{\ell(\mu)}\left(\prod_i \frac{1}{\mu_i}\right)^2\\
&\times \left(\prod_i\frac{1}{\Gamma(-\mu_i t_1/z)\Gamma(-\mu_i t_2/z)} \right)z^{-(n-\ell(\mu))} |\mu\rangle\\
=\ &z^{\ell(\mu)}z^{n(t_1+t_2)/z}e^{-\pi\sqrt{-1}n(t_1+t_2)/z} \frac{1}{\prod_i \mu_i}\prod_i \frac{\mu_i^{-\mu_i t_1/z}\mu_i^{-\mu_i t_2/z}}{\Gamma(-\mu_i t_1/z)\Gamma(-\mu_i t_2/z)}\\
&\times (2\pi)^{n-\ell(\mu)}(2\pi\sqrt{-1})^{2\ell(\mu)}(-1)^{2\ell(\mu)}|\mu\rangle\, .
\end{split}    
\end{equation*}

\subsection{Proof of Theorem \ref{thm:comparison}}
The last step of the proof is the matching
\begin{equation}\label{eqn:final_eqn_class}
 \delta_3|\mu\rangle=\Delta^{\text{Hilb}}|\mu\rangle\, .   
\end{equation}
By comparing the expression above for $\delta_3|\mu\rangle$ with Equation (\ref{eqn:delta_H}), we see the matching \eqref{eqn:final_eqn_class} follows from the following equality in $\widetilde{\cF}$:
\begin{equation}\label{eqn:final_eqn}
(-1)^{n+\ell(\mu)}(2\pi\sqrt{-1})^{n+\ell(\mu)}|\widetilde{\mu}\rangle= (2\pi)^{n-\ell(\mu)}(2\pi\sqrt{-1})^{2\ell(\mu)}(-1)^{2\ell(\mu)}|\mu\rangle\, .   
\end{equation}
We verify \eqref{eqn:final_eqn} as follows. By definition, $|\widetilde{\mu}\rangle=(-\sqrt{-1})^{\ell(\mu)-n}|\mu\rangle$. Thus, 
\begin{equation*}
(-1)^{n+\ell(\mu)}(2\pi\sqrt{-1})^{n+\ell(\mu)}|\widetilde{\mu}\rangle=(-1)^{n+\ell(\mu)}(2\pi\sqrt{-1})^{n+\ell(\mu)}(-\sqrt{-1})^{\ell(\mu)-n}|\mu\rangle\, .    
\end{equation*}
We calculate
\begin{equation*}
\begin{split}
&(-1)^{n+\ell(\mu)}(2\pi\sqrt{-1})^{n+\ell(\mu)}(-\sqrt{-1})^{\ell(\mu)-n}=(2\pi)^{n+\ell(\mu)}(-1)^{2\ell(\mu)}\sqrt{-1}^{2\ell(\mu)}\, ,\\
&(2\pi)^{n-\ell(\mu)}(2\pi\sqrt{-1})^{2\ell(\mu)}(-1)^{2\ell(\mu)}=(2\pi)^{n+\ell(\mu)}(-1)^{2\ell(\mu)}\sqrt{-1}^{2\ell(\mu)}.
\end{split}    
\end{equation*}
This proves \eqref{eqn:final_eqn}, hence \eqref{eqn:final_eqn_class}.

In summary, our calculations establish 
the equation 
\begin{equation*}
\begin{split}
&z^{-\mu^\text{Sym}}z^{\rho^\text{Sym}}\left(\Gamma_\text{Sym}\cup (2\pi\sqrt{-1})^{\frac{\text{deg}^\text{Sym}_0}{2}}\widetilde{\ch}(\mathbb{FM}(k_\lambda)) \right)\\
=\  &\mathsf{CK}\big|_{z\mapsto -z}\left(z^{-\mu^\text{Hilb}}z^{\rho^\text{Hilb}}\left(\Gamma_\text{Hilb}\cup (2\pi \sqrt{-1})^{\frac{\text{deg}^\text{Hilb}_0}{2}}\ch(k_\lambda)\right)\right)\, ,
\end{split}
\end{equation*}
which completes the proof of Theorem \ref{thm:comparison} .
\qed

\end{document}